\theoremstyle{plain}
\newtheorem{theorem}{Theorem}[section]
\newtheorem{proposition}[theorem]{Pro\-po\-si\-ti\-on}
\theoremstyle{definition}
\newtheorem{definition}[theorem]{Definition}
\newtheorem{remark}[theorem]{Remark}
\renewcommand{\phi}{\varphi}
\renewcommand{\tilde}{\widetilde}
\def\etc.{\emph{et\thinspace c.}}
\DeclareMathOperator{\Id}{Id}
\DeclareMathOperator{\vol}{vol}
\def\C{\mathbb{C}}
\def\R{\mathbb{R}}
\def\Z{\mathbb{Z}}
\def\Dc{\mathcal{D}}
\newcommand{\Lie}{\mathcal{L}}
\def\dfn{\mathbin{{:}{=}}}
\def\nfd{\mathbin{{=}{:}}}
\newcommand{\at}[1]{{}_{\displaystyle|_{\scriptstyle{#1}}}}
\newcommand{\aeq}{\stackrel{\scriptscriptstyle\text{a.e.\!}}{=}}
\newcommand{\rest}[1]{{}_{\displaystyle\restriction_{\scriptstyle{#1}}}}
\def\norm#1{\|#1\|}
\def\conorm#1{{\norm{#1}^*}}
\begin{document}
\title[Longitudinal foliation rigidity and Lipschitz-continuous invariant forms]{Longitudinal foliation rigidity and Lipschitz-continuous invariant forms for hyperbolic flows}
\author{Patrick Foulon}
\address{Institut de Recherche Mathematique A\-van\-c\'ee\\
UMR 7501 du Centre National de la Recherche Scientifique\\
7 Rue Ren\'e Descartes\\
67084 Strasbourg Cedex\\
France}
\email{foulon@math.u-strasbg.fr}
\author{Boris Hasselblatt}
\address{Department of Mathematics\\
Tufts University\\
Medford, MA 02155\\
USA}
\email{Boris.Hasselblatt@tufts.edu}
\begin{abstract}
  In several contexts the defining invariant structures of a hyperbolic
dynamical system are smooth only in systems of algebraic origin (smooth
rigidity), and we prove new results of this type for a class of flows.

For a compact Riemannian manifold and a uniformly quasiconformal
transversely symplectic $C^2$ Anosov flow we define the \emph{longitudinal
  KAM-cocycle} and use it to prove a rigidity result: The joint
stable/unstable subbundle is Zygmund-regular, and higher regularity implies
vanishing of the longitudinal KAM-cocycle, which in turn implies that the
subbundle is Lipschitz-continuous and indeed that the flow is smoothly
conjugate to an algebraic one. To establish the latter, we prove results
for algebraic Anosov systems that imply smoothness and a special structure
for any Lipschitz-continuous invariant 1-form.

Several features of the reasoning are interesting: The use of exterior
calculus for Lipschitz-continuous forms, that the arguments for geodesic
flows and infranilmanifoldautomorphisms are quite different, and the need
for mixing as opposed to ergodicity in the latter case.
\end{abstract}
\maketitle
\section{Introduction}
\subsection{Statement of main result}
For Anosov systems (\Ref{DEFcanonicaltimechange}), both diffeomorphisms and
flows, interesting phenomena of smooth and geometric rigidity have been
observed in connection with the degree of (transverse) regularity of the
(weak) stable and unstable subbundles of these systems. The seminal result
was the study of volume-preserving Anosov flows on 3-manifolds by Hurder
and Katok \cite{HurderKatok}, which showed that the weak-stable and
weak-unstable foliations are $C^{1+\text{Zygmund}}$ and that there is an
obstruction to higher regularity whose vanishing implies smoothness of
these foliations. This, in turn, happens only if the Anosov flow is
smoothly conjugate to an algebraic one. The cocycle obstruction described
by \emph{K}atok and Hurder was first observed by \emph{A}nosov and is the
first nonlinear coefficient in the \emph{M}oser normal form. Therefore one
might call it the \emph{KAM}-cocycle. This should not be confused with
``KAM'' as in ``Kolmogorov--Arnold--Moser'', and Hurder and Katok refer to
this object as the \emph{Anosov-cocycle}.

In \cite{FH} we showed some analogous rigidity features associated with the
\emph{longitudinal} direction, \ie associated with various degrees of
regularity of the sum of the \emph{strong} stable and unstable subbundles:
For a volume-preserving Anosov flow on a 3-manifold the strong stable and
unstable foliations are Zygmund-regular \cite[Section II.3,
  (3\textperiodcentered1)]{Zygmund}, see \Ref{defregularity}, and there is
an obstruction to higher regularity, which admits a direct geometric
interpretation and whose vanishing implies high smoothness of the joint
strong subbundle and that the flow is either a suspension or a contact
flow. In the papers announced here \cite{FHII,FHIII} we push this to
higher-dimensional systems:
\begin{theorem}\Label{THMMain}
Let $M$ be a compact Riemannian manifold of dimension at least 5, $k\ge2$,
$\varphi\colon\R\times M\to M$ a uniformly quasiconformal
(\Ref{DEFUQC+Symm}) transversely symplectic $C^k$ Anosov flow.

Then $E^u\oplus E^s$ is Zygmund-regular and there is an obstruction to
higher regularity that defines the cohomology class of a cocycle we call
the\/ \emph{longitudinal KAM-cocycle}. This obstruction can be described
geometrically as the curvature of the image of a transversal under a return
map, and
the following are equivalent:
\begin{enumerate}
\item $E^u\oplus E^s$ is ``little Zygmund'' (see \Ref{defregularity}).\label{itemZygmund}
\item The longitudinal KAM-cocycle is a coboundary.\label{itemKAMcobdy}
\item $E^u\oplus E^s$ is Lipschitz-continuous.\label{itemLipschitz}
\item $\varphi$ is up to finite covers, constant rescaling and a
  \emph{canonical time-change} (\Ref{DEFcanonicaltimechange})
  $C^k$-conjugate to the suspension of a symplectic Anosov automorphism of
  a torus or the geodesic flow of a real hyperbolic
  manifold.\label{itemRigidity}
\end{enumerate}
\end{theorem}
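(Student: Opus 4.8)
The plan is to prove the four conditions equivalent by running the cycle $(4)\Rightarrow(3)\Rightarrow(1)\Rightarrow(2)\Rightarrow(4)$, with $(2)\Rightarrow(3)$ falling out of the bootstrap used in the last implication. First, though, come two preliminary tasks: proving Zygmund regularity of $E\dfn E^u\oplus E^s$ and constructing the longitudinal KAM-cocycle together with its geometric meaning. I would fix an orbit, use the transverse symplectic form and uniform quasiconformality to equip $E^u$ and $E^s$ with flow-quasi-invariant conformal structures (legitimate since $\dim M\ge5$ forces $\dim E^u=\dim E^s\ge2$), and express $E$ locally as the graph over a transversal of a function whose second symmetric difference is to be controlled. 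Pushing the transversal forward by a return map and using the \emph{precise} contraction and expansion rates — which are scalar up to bounded distortion exactly because the flow is uniformly quasiconformal — yields the Zygmund bound $|f(x+h)+f(x-h)-2f(x)|=O(|h|)$ via a non-stationary-normal-form computation along the orbit. The first nonlinear coefficient in that expansion, the analog of the coefficient in the Moser normal form, is a function on the unit bundle whose obstruction to being a coboundary over the flow is the longitudinal KAM-cocycle; unwinding the definitions identifies its value with the geodesic curvature at the base point of the curve obtained by applying the return map to the transversal, which is the stated geometric description.

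\emph{The soft equivalences.} The implication $(4)\Rightarrow(3)$ is immediate: for a suspension of a toral automorphism $E$ is spanned by coordinate directions and hence smooth, for the geodesic flow of a real hyperbolic manifold $E$ is smooth by the classical description of the horospherical subbundles, and Lipschitz continuity of $E$ survives finite covers, constant rescalings and canonical time-changes. For $(3)\Rightarrow(1)$ one needs only that a Lipschitz function satisfies the $o(|h|)$ refinement of the Zygmund condition, which is elementary. The core of the soft part is $(1)\Rightarrow(2)$: a Livšic-type argument identifies the Zygmund defect of $E$ with the longitudinal KAM-cocycle up to a coboundary, and the little-Zygmund hypothesis forces the periodic-orbit obstructions of that cocycle to vanish, so the cocycle is a coboundary with at least continuous transfer function.

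\emph{The rigidity implication $(2)\Rightarrow(4)$.} The first step is a bootstrap: once the longitudinal KAM-cocycle is a coboundary, a regularity ladder exploiting the smoothing properties of the flow along orbits and along transversals (in the spirit of Hurder--Katok and of Livšic--Guillemin--Kazhdan) upgrades $E$ from Zygmund to nearly $C^k$, hence Lipschitz and indeed as smooth as the flow allows; this also yields $(2)\Rightarrow(3)$. With $E$ smooth, restricting the transverse symplectic form to $E^u$ and to $E^s$ and combining it with the contact-type or fibered structure it induces produces a Lipschitz invariant $1$-form, and one then invokes the promised structural results for algebraic systems: either the flow is, up to the allowed reductions, a suspension, or it is a contact Anosov flow with smooth strong and weak splittings. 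In the suspension case the base is a uniformly quasiconformal symplectic Anosov diffeomorphism of an infranilmanifold whose smooth nonstationary joint structure forces, via mixing, algebraicity and then a torus; in the contact case one combines smoothness of the Anosov splitting with uniform quasiconformality to obtain an invariant conformal structure on the boundary at infinity of the universal cover, and concludes from its rigidity that the flow is the geodesic flow of a real hyperbolic manifold.

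\emph{Main obstacle.} I expect the rigidity implication $(2)\Rightarrow(4)$ to be the hard part, for three reasons. One must make the invariant-$1$-form argument rigorous with merely Lipschitz forms, which requires an exterior calculus for Lipschitz forms — $d$ defined a.e.\ via Rademacher, with Cartan and Stokes formulas surviving. One must cleanly separate the suspension and contact cases and handle each by a genuinely different method, and in the infranilmanifold case ergodicity is not enough, so the argument must be arranged so that the mixing of the flow is available. And the bootstrap itself is delicate: one has to convert the purely cohomological vanishing of the cocycle into honest transverse smoothness of $E$ without losing control of the conformal structures on $E^u$ and $E^s$ along the way.
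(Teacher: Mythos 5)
There is a genuine gap, and it occurs at the two places where your cycle differs from the paper's. First, your step $(3)\Rightarrow(1)$ rests on the claim that a Lipschitz function satisfies the little-Zygmund condition, and that claim is false: for $f(x)=|x|$ one has $f(h)+f(-h)-2f(0)=2|h|$, which is $O(|h|)$ but not $o(|h|)$; the paper itself stresses that ``little Zygmund'' follows from differentiability \emph{but not} from Lipschitz-continuity. The implication $(3)\Rightarrow(1)$ is true here only as a consequence of the rigidity: the paper proves $(1)\Rightarrow(2)\Rightarrow(3)$ (deferred to \cite{FHII}, along the lines of \cite{FH}) and then $(3)\Rightarrow(4)$, and the cycle closes because in item $(4)$ the models have smooth $E^u\oplus E^s$ and canonical time-changes by smooth closed forms do not lower the regularity of the canonical form (so smoothness, hence little Zygmund, is recovered). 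Your ordering $(4)\Rightarrow(3)\Rightarrow(1)\Rightarrow(2)\Rightarrow(4)$ therefore collapses at $(3)\Rightarrow(1)$ unless you reroute it through the rigidity statement.

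Second, your rigidity step $(2)\Rightarrow(4)$ misses how the quasiconformality hypothesis is actually used and what the hard content is. The paper does not bootstrap the coboundary condition to near-$C^k$ regularity of $E^u\oplus E^s$ and then re-derive algebraicity (boundary-at-infinity conformal rigidity, separate contact/suspension analysis); instead, Fang's theorem \cite{FangCo} already gives, from uniform quasiconformality alone and with no regularity of $E^u\oplus E^s$, a $C^\infty$ \emph{orbit equivalence} to the suspension of a symplectic hyperbolic toral automorphism or to the geodesic flow of a closed real hyperbolic manifold. The whole remaining problem is to upgrade orbit equivalence to conjugacy up to constant rescaling and a \emph{canonical} time-change, and this is where only Lipschitz regularity of the canonical $1$-form of $\varphi$ (equivalently of $E^u\oplus E^s$) is available: one transports that form to the algebraic model and proves the low-regularity rigidity theorems --- for locally symmetric geodesic flows, a Lipschitz $1$-form $A$ with $d A$ flow-invariant is smooth and $dA=\kappa\,dB$ (Kanai connection, Birkhoff averages of $L^1$-approximations, arguments of \cite{BFL1,BFL}); for suspensions, an essentially bounded invariant $2$-form is a.e.\ equal to a smooth invariant one (Jordan form plus \emph{mixing}), and vanishes if exact. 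Your proposed ``regularity ladder'' to smoothness of $E^u\oplus E^s$ is exactly what the authors say is not at their disposal and would itself need proof, and your sketch never produces the conclusion that the residual time-change is a canonical one (i.e., given by a closed $1$-form), which is an explicit part of item $(4)$ and is precisely what the Lipschitz-form theorems deliver via Propositions 2.1--2.3 of the paper. The preliminary Zygmund bound, the cocycle construction, and your $(1)\Rightarrow(2)$ sketch are consistent in spirit with \cite{FH,FHII}, but as organized the proposal does not establish the theorem.
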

To show that \ref{itemLipschitz}.\ implies \ref{itemRigidity}.\ we study
the \emph{canonical 1-form} (\Ref{DEFcanonicaltimechange}) of the
time-change of a geodesic flow or of the suspension of an
infranilmanifoldautomorphism, and because we only have Lipschitz-continuity
at our disposal, we need to explore how smooth-rigidity results can be
pushed to the lowest conceivable regularity. This requires two main
results. On one hand, a Lipschitz-continuous 1-form whose exterior
derivative is invariant under the geodesic flow of a negatively curved
locally symmetric space must be (a constant multiple of) the canonical
1-form for the flow. On the other hand, an essentially bounded 2-form
invariant under an infranilmanifoldautomorphism is smooth. A special case
of the second result is that in which the 2-form arises as the exterior
derivative of a Lipschitz-continuous 1-form, in which case it vanishes.
Thus, both results involve exterior calculus of Lipschitz-continuous
1-forms. It is clear that this has to be done with care, and we invoke
results to the effect that, for instance, the classical Stokes Theorem
holds for Lipschitz-continuous forms \cite{Dubrovskiy}.

Given this common motivation, it is surprising that our two separate
results involve rather different arguments for geodesic flows on one hand
and suspensions on the other hand.

A particular point of interest is in this respect that while, like with
many other results in hyperbolic dynamics, ergodic theory enters the proof
in the case of geodesic flows only to the extent that we use ergodicity of
the geodesic flow, it turns out that for the case of a suspension we use in
an essential way that the infranilmanifoldautomorphism is indeed mixing
rather than merely ergodic. This reflects the need to deal with parabolic
effects due to the nilpotent part.

\subsection{Background and terminology}
We now introduce the notions that play a role in this result and the proof.
\begin{definition}[\cite{KatokHasselblatt}]\Label{DEFcanonicaltimechange}
An Anosov flow on a manifold $M$ is a smooth flow $\varphi^t$ with 
\begin{itemize}
\item an
invariant decomposition $TM=X\oplus E^u\oplus E^s$ (where
$X=\dot\varphi\neq0$ is the generator of the flow and $E^u$ and $E^s$ are
called the unstable and stable subbundles) and 
\item a Riemannian metric on $M$
such that $D\varphi^t\rest{E^s}$ and $D\varphi^{-t}\rest{E^u}$ are
contractions whenever $t>0$.
\end{itemize}
The definition of Anosov diffeomorphism is
analogous with $t\in\Z$ and $X$ absent. 

The \emph{canonical 1-form} $A$ of an Anosov flow $\varphi^t$ is defined by
$A(X)=1$ and $E^u,E^s\subset\ker A$.
A \emph{canonical time-change} is defined using a closed
1-form $\alpha$ by replacing the generator $X$ of the flow by the vector
field $X/(1+\alpha(X))$, provided $\alpha$ is such that the denominator is
positive. (See \Ref{Scantimechg} for more on canonical time-changes.)
\end{definition}
The subbundles are invariant and (H\"older-) continuous with
smooth integral manifolds $W^u$ and $W^s$ that are coherent in that $q\in
W^u(p)\Rightarrow W^u(q)=W^u(p)$. $W^u$ and $W^s$ define laminations
(continuous foliations with smooth leaves).
\begin{definition}\Label{defregularity}
A function $f$ between metric spaces is said to be \emph{H\"older
continuous} if there is an $H>0$, called the H\"older exponent, such that
$d(f(x),f(y))\le\text{const.} d(x,y)^H$ whenever $d(x,y)$ is sufficiently
small. We specify the exponent by saying that a function is $H$-H\"older. A
continuous function $f\colon U\to L$ on an open set $U\subset L'$ in a
normed linear space to a normed linear space is said to be \emph{Zygmund-regular}
if there is $Z>0$ such that $\|f(x+h)+f(x-h)-2f(x)\|\le Z\|h\|$ for all
$x\in U$ and sufficiently small $\|h\|$. To specify a value of $Z$ we may
refer to a function as being $Z$-Zygmund. The function is said to be
\emph{``little Zygmund''} (or ``zygmund'') if $\|f(x+h)+f(x-h)-2f(x)\|=o(\|h\|)$.
For maps between manifolds these definitions are applied in smooth local
coordinates.
\end{definition}
Zygmund regularity implies modulus of continuity $O(|x\log|x||)$ and hence
$H$-H\"older continuity for all $H<1$ \cite[Section II.3,
  Theorem~(3\textperiodcentered4)]{Zygmund}. It follows from
Lipschitz-continuity and hence from differentiability. Being ``little
Zygmund'' implies having modulus of continuity $o(|x\log|x||)$ and follows
from differentiability but not from Lipschitz-continuity.

The regularity of the unstable subbundle $E^u$ is usually substantially
lower than that of the weak-unstable subbundle $E^u\oplus E^\varphi$. The
exception are geodesic flows, where the strong unstable subbundle is
obtained from the weak-unstable subbundle by intersecting with the kernel
of the invariant contact form. This has the effect that the strong-unstable
and weak-unstable subbundles have the same regularity. However, time
changes affect the regularity of the strong-unstable subbundle, and this is
what typically keeps its regularity below $C^1$. In \cite{FH} we
presented a \emph{longitudinal KAM-cocycle} that is the obstruction to
differentiability, and we derived higher regularity from its vanishing.
\begin{theorem}[{\cite[Theorem 3]{FH}}]\Label{THMOldMain}
Let $M$ be a 3-manifold, $k\ge2$, $\varphi\colon\R\times M\to M$ a $C^k$
volume-preserving Anosov flow.
Then $E^u\oplus E^s$ is Zygmund-regular, and
there is an obstruction to higher regularity that can be described
geometrically as the curvature of the image of a transversal under a return
map.
This obstruction defines the cohomology class of a cocycle (the
longitudinal KAM-cocycle), and the following are equivalent:
\begin{enumerate}
\item $E^u\oplus E^s$ is ``little Zygmund'' (see \Ref{defregularity}).
\item The longitudinal KAM-cocycle is a coboundary.
\item $E^u\oplus E^s$ is Lipschitz-continuous.
\item $E^u\oplus E^s\in C^{k-1}$.
\item $\varphi$ is a suspension or contact flow.\label{itemIsSuspension}
\end{enumerate}
\end{theorem}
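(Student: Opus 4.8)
The plan is to reduce the entire statement to the analysis of a scalar ``defining function'' for the two-plane field $\eta\dfn E^u\oplus E^s$. Since $\dim M=3$ we have $\eta=\ker A$ for the canonical $1$-form $A$, and in a smooth local flow-box chart in which $X=\partial/\partial t$ one writes $A=dt+a\,dx+b\,dy$, so that the transverse regularity of $\eta$ is exactly that of the functions $a,b$. Invariance of the one-dimensional bundles $E^u,E^s$ under $D\varphi^t$ turns into a projective (Riccati-type) cocycle equation for these functions along orbits, and because the bundles are lines this cocycle is essentially scalar. I would then form the symmetric second difference $g(\tau+h)+g(\tau-h)-2g(\tau)$ of such a defining function along a strong unstable leaf and transport it forward along the flow for time $\sim\log(1/|h|)$, until the leafwise increment has been expanded back to unit scale; this telescopes it into a sum of $O(\log(1/|h|))$ nonlinear contributions, each carrying a multiplicative weight that is precisely \emph{critical} in the sense that the geometric expansion of the increment exactly offsets the weight coming from the derivative cocycle. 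Summing the series yields the bound $\le Z|h|$, i.e.\ Zygmund regularity of $\eta$, and shows this is sharp in general. The ``value at scale zero'' of the telescoped nonlinear term is the obstruction: over a periodic orbit it is the second derivative at the fixed point of the defining function of $\eta$ restricted to the relevant leaf inside a transversal — geometrically, the curvature of the image of that transversal under the return map — and the Zygmund bound is exactly what makes these values assemble into a well-defined H\"older cocycle over $\varphi^t$, whose cohomology class we call the longitudinal KAM-cocycle.

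Given this construction, the equivalence of (1) and (2) is essentially built in: being ``little Zygmund'' says the symmetric second difference above is $o(|h|)$, which forces the critical term, hence the cocycle, to be a coboundary; conversely a coboundary (with H\"older transfer function, by Livšic regularity) lets one subtract a correction that cancels the critical term, improving the $O(|h|)$ bound to $o(|h|)$. For (3)$\Rightarrow$(2) I would use that a Lipschitz-continuous $\eta$ has defining functions that are differentiable almost everywhere, and that at each differentiability point the obstruction term vanishes; since the obstruction represents a H\"older cohomology class computable from periodic data, its vanishing almost everywhere forces it to be a coboundary, again by a Livšic-type argument using that the volume-preserving, hence topologically transitive, flow has a dense orbit. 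The easy implications then close part of the loop: (5)$\Rightarrow$(4) because for a suspension $\eta$ is the smooth tangent bundle of the fibers and for a contact flow $\eta=\ker A$ with $A$ a smooth contact form, so $\eta$ is $C^\infty$ in either case; and (4)$\Rightarrow$(3) is immediate.

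The crux, where I expect the real work, is the bootstrap (2)$\Rightarrow$(4): one cannot simply invoke Journ\'e's lemma, because $\eta$ is automatically $C^\infty$ along the flow but its restriction to a strong unstable leaf involves $E^s$ along that leaf, which a priori is only H\"older. The coboundary hypothesis is exactly what repairs this. Writing the leafwise restriction of $\eta$ along $W^{uu}$ as the solution of the projective cocycle equation whose inhomogeneity is now a coboundary with $C^{k-2}$ data, one solves that equation and bootstraps the leafwise regularity of $\eta$ from Zygmund up to $C^{k-1}$ along $W^{uu}$, and symmetrically along $W^{ss}$; then $\eta$ is $C^{k-1}$ along the two transverse foliations $W^{uu},W^{ss}$ together with the flow direction, and Journ\'e's lemma delivers $\eta\in C^{k-1}$. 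Making this precise — controlling the single loss of derivative and the interaction between the leafwise-smooth and the merely-H\"older directions — is the main obstacle, analogous to the corresponding step in the analysis of the weak foliations by Hurder and Katok; the Zygmund a priori estimate of the first paragraph is the most delicate computation.

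It remains to prove (4)$\Rightarrow$(5). Assuming $\eta\in C^{k-1}$, the canonical $1$-form $A\in C^{k-1}$ is a $C^1$ invariant $1$-form with $A(X)=1$, so $dA$ is a continuous invariant $2$-form and $A\wedge dA=f\,\Omega$ for a fixed smooth volume form $\Omega$ and a continuous invariant function $f$, which is constant because the flow is topologically transitive. If $f=0$ then $A$ is closed and nowhere zero with $A(X)=1>0$, and by the classical theory of global cross-sections for flows (Schwartzman, Fried) the flow is a suspension. If $f\neq0$ then $A\wedge dA$ is nowhere zero, so $A$ is a contact form; being invariant with $A(X)=1$ it exhibits $X$ as a constant multiple of its Reeb field, so the flow is a contact flow. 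This closes the cycle of implications and completes the proof.
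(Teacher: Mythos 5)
Note first that the paper you were given does not actually prove this statement: it is quoted verbatim from \cite{FH} (Theorem 3 there), so the only fair comparison is with the strategy of that earlier paper, which your outline does follow in broad strokes (a Hurder--Katok-style telescoped second-difference estimate, a cocycle obstruction given by curvatures of images of transversals, Liv\v{s}ic theory, a Journ\'e-type bootstrap, and the dichotomy via $A\wedge dA$). Two of your closing steps, however, contain genuine gaps. In (4)$\Rightarrow$(5) you assert that $f\equiv0$ makes $A$ closed; by itself $A\wedge dA=0$ only gives integrability of $\ker A$ (Frobenius), not $dA=0$. You need the extra observation that $i_XdA=\Lie_XA-d(A(X))=0$ because $A$ is invariant and $A(X)\equiv1$; then $A\wedge dA(X,\xi^u,\xi^s)=dA(\xi^u,\xi^s)$ indeed forces $dA=0$ --- a one-line repair, but it is missing. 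More seriously, in the closed case your appeal to Schwartzman--Fried cross-section theory only produces a flow admitting a global cross-section, i.e.\ a suspension under a nonconstant roof function; for such flows $E^u\oplus E^s$ is in general \emph{not} smooth, so with that reading of ``suspension'' your own implication (5)$\Rightarrow$(4) would be false and the cycle of equivalences would not close. The theorem requires a suspension with constant roof, and that comes from the structure you already have but do not use: $\ker A=E^u\oplus E^s$ is an invariant integrable foliation, so the flow permutes its leaves and the return time is constant along each leaf, and the leaves are compact fibers of a fibration over the circle (this is Plante's theorem on Anosov flows with integrable $E^u\oplus E^s$, or can be extracted from the closed nowhere-vanishing form $A$).

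Your mechanism for (3)$\Rightarrow$(2) is also not right as stated: you claim that at each differentiability point ``the obstruction term vanishes'' and that vanishing almost everywhere forces a coboundary. The obstruction is second-order data (a curvature), which a Lipschitz bound does not control pointwise; moreover a H\"older cocycle vanishing almost everywhere would vanish identically, which is much stronger than assertion (2) and is not what Lipschitz continuity of $E^u\oplus E^s$ yields. The correct (and standard) argument is that Rademacher differentiability of the Lipschitz defining function provides an almost-everywhere-defined, essentially bounded derivative that solves the coboundary equation as a \emph{measurable transfer function}, after which the measurable Liv\v{s}ic theorem (using ergodicity of the invariant volume) upgrades this to a genuine H\"older coboundary. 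Finally, your step (2)$\Rightarrow$(4), which you yourself identify as the crux (leafwise bootstrap along $W^{uu}$ and $W^{ss}$ followed by Journ\'e), is only a plan here, so even granting the a priori Zygmund estimate the proposal does not yet constitute a proof; the two items above are the places where the argument as written would actually fail rather than merely remain incomplete.
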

In \ref{itemIsSuspension}.\ no stronger rigidity should be expected because
$E^u\oplus E^s$ is smooth for all suspensions and contact flows. See
\cite{Paternain,DairbekovPaternain} for applications of this to magnetic
flows.

The work by Hurder and Katok in \cite{HurderKatok} inspired developments of
substantial extensions to higher dimensions, see, for example, \cite{Hyp}.
The present work extends our earlier work to higher-dimensional systems in
this ``longitudinal'' context. This requires somewhat stringent
assumptions, however.
\begin{definition}\Label{DEFUQC+Symm}
An Anosov flow is said to be \emph{uniformly quasiconformal} if
\begin{equation}\label{eqquasiconformality}
K_i(x,t)\dfn\frac{\norm{d\varphi^t\rest{E^i}}}{\conorm{d\varphi^t\rest{E^i}}}
\end{equation}
is bounded on $\{u,s\}\times M\times\R$, where
$\conorm{A}\dfn\min_{\|v\|=1}\|Av\|$ is the \emph{conorm} of a linear map $A$.
\end{definition}
\subsection{Rigidity}
The proof that
\ref{itemZygmund}.$\Rightarrow$\ref{itemKAMcobdy}.$\Rightarrow$\ref{itemLipschitz}.\ in
\Ref{THMMain} largely follows the line of reasoning already presented in
\cite{FH} and appears in \cite{FHII}.

In the 3-dimensional case we showed that smoothness of $E^u\oplus E^s$
implies that $\varphi$ is a suspension or contact flow, but in the present
situation we obtain more detailed information because of the
quasiconformality-assumption. This uses a rigidity theorem by Fang:
\begin{theorem}[{\cite[Corollary 3]{FangCo}}]\Label{THMFang}
Let $M$ be a compact Riemannian manifold and $\varphi\colon\R\times M\to M$ a
transversely symplectic Anosov flow with $\dim E^u\ge2$ and $\dim E^s\ge2$.
Then $\varphi$ is quasiconformal if and only if $\varphi$ is up to finite
covers $C^\infty$ orbit equivalent either to the suspension of a symplectic
hyperbolic automorphism of a torus, or to the geodesic flow of a closed
hyperbolic manifold.
\end{theorem}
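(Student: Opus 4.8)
The plan is to treat the two implications separately. For the ``if'' direction I would argue by direct computation: for the geodesic flow of a closed real hyperbolic manifold, $D\varphi^t$ acts on the strong unstable (respectively strong stable) subbundle, in the natural metric, by the scalar $e^{t}$ (respectively $e^{-t}$), so $K_u\equiv K_s\equiv1$; for the suspension of a symplectic hyperbolic toral automorphism $L$ whose unstable eigenvalues all have the same modulus (e.g.\ $L|_{E^u}$ a scalar times a rotation), the cross-section return map acts on $E^u$ and $E^s$ by the conformal map $L$, so $K_i$ is bounded; and uniform quasiconformality survives the finite covers and $C^\infty$ orbit equivalences permitted in the statement, since these rescale $D\varphi^t|_{E^i}$ only by a bounded factor.

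For the converse I would proceed in three stages. First I would extract an invariant conformal structure from uniform quasiconformality: averaging the restrictions $(\varphi^t)^*g|_{E^u}$ over $t\ge0$ against a suitable weight — boundedness of $K_u$ keeps these in a fixed compact subset of the bundle of conformal structures on $E^u$ — yields a measurable section $\sigma^u$ that is $D\varphi^t$-invariant up to a positive scalar and invariant under the unstable holonomy pseudogroup, and symmetrically one gets $\sigma^s$ on $E^s$. The transverse symplectic form pairs $E^u$ with $E^s$ nondegenerately, so it identifies $\sigma^s$ with the conformal structure dual to $\sigma^u$, tying the two together.

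The second and decisive stage is to upgrade $\sigma^u$ and $\sigma^s$ (equivalently, $E^u$ and $E^s$) from measurable to $C^\infty$. Here I would use that the cocycles $D\varphi^t|_{E^u}$, $D\varphi^t|_{E^s}$ are fiber-bunched, so an invariant conformal structure is automatically H\"older-continuous; then exploit holonomy invariance together with topological transitivity of $\varphi$ to propagate regularity transversally; and finally run a Liv\v{s}ic/elliptic-regularity bootstrap to reach $C^\infty$. It is in this last step that the symplectic compatibility of $\sigma^u$ with $\sigma^s$ and the hypotheses $\dim E^u\ge2$, $\dim E^s\ge2$ are used essentially — for a one-dimensional subbundle conformality carries no information and the argument collapses. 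I expect this stage to be the main obstacle: promoting a merely measurable structure, a priori controlled only along the leaves, to a globally smooth one.

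The third stage invokes the classical dichotomy once $E^u\oplus E^s$ is known to be smooth. Let $A$ be the $C^\infty$ $1$-form with $A(X)=1$ and $\ker A=E^u\oplus E^s$. If $dA=0$, then after a canonical time change (and passing to a finite cover on which $A$ is integral) $\varphi$ is smoothly orbit equivalent to the suspension of its Poincar\'e return map, which is a uniformly quasiconformal transversely symplectic Anosov diffeomorphism of a torus and hence, by the smooth classification of such diffeomorphisms, $C^\infty$-conjugate to a symplectic hyperbolic toral automorphism of the conformal type above. If $dA\ne0$, then $A\wedge(dA)^n\ne0$, so $A$ is a contact form and $\varphi$ is, up to a smooth time change, the Reeb flow of $A$ — a contact Anosov flow with $C^\infty$ strong stable and strong unstable distributions; by the Benoist--Foulon--Labourie theorem such a flow is, up to a finite cover and $C^\infty$ orbit equivalence, the geodesic flow of a closed negatively curved locally symmetric space, and uniform quasiconformality of $E^u$ then forces that space to be real hyperbolic, since in the complex, quaternionic and Cayley hyperbolic cases $D\varphi^t$ acts on $E^u$ with two distinct exponential rates and $K_u$ is unbounded. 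Assembling the two cases gives the stated classification.
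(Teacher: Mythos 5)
This theorem is not proved in the paper at all: it is quoted verbatim from Fang \cite[Corollary 3]{FangCo}, so your argument has to stand on its own rather than be measured against an in-paper proof. Your ``if'' direction is essentially fine, except that a $C^\infty$ orbit equivalence does not carry $E^u$ to $E^u$ (only $E^u\oplus\R X$ to $E^u\oplus\R X$), so quasiconformality must be transferred through the quotient bundle $(E^u\oplus\R X)/\R X$; that is a repairable bookkeeping point.

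The converse, however, has a genuine gap at your second stage, and it is not patchable: uniform quasiconformality together with transverse symplecticity does \emph{not} imply that $E^u$ and $E^s$ are $C^\infty$, nor even Lipschitz. Quasiconformality is an orbit-equivalence invariant, hence survives arbitrary smooth time changes (the identity is an orbit equivalence), and any time change $X_0/f$ of the geodesic flow of a closed real hyperbolic manifold still preserves the smooth closed $2$-form $dA_0$ (because $dA_0(X_0,\cdot)=0$ gives $dA_0(X_0/f,\cdot)=0$ and hence $\Lie_{X_0/f}dA_0=0$), so it remains a uniformly quasiconformal transversely symplectic Anosov flow; yet for a generic $f$ its splitting $E^u\oplus E^s$ is not Lipschitz. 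Your bootstrap would ``prove'' its splitting smooth, a contradiction. This is precisely why Fang's Corollary 3 concludes only \emph{orbit equivalence}, why his Corollary 2 (\Ref{THMFang2}) must add the hypothesis $E^s\oplus E^u\in C^1$ to upgrade to flow equivalence, and why Lipschitz continuity of $E^u\oplus E^s$ appears as a nontrivial rigidity condition in \Ref{THMMain} rather than being automatic. What Sadovskaya/Kalinin--Sadovskaya-type continuity results give you is regularity of the invariant \emph{conformal structures} on $E^u$ and $E^s$ (and smoothness along the leaves), which must not be conflated with regularity of the splitting itself; the known proofs proceed from there via Sullivan--Tukia on the ideal boundary (geodesic-flow case) and Franks--Manning-type classification (suspension case), producing an orbit equivalence without ever establishing smoothness of the original splitting. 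Consequently your third stage, which feeds a smooth splitting into the suspension/contact dichotomy and Benoist--Foulon--Labourie, has no input; note also that even granting smoothness, $dA\neq0$ alone does not yield the contact condition $A\wedge(dA)^n\neq0$, so that dichotomy would itself need an argument.
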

This also serves to illustrate that the assumption of uniform
quasiconformality is quite restrictive. We should also point out that our
result about rigidity of the situation with $E^u\oplus E^s\in C^1$ overlaps
with a closely related one by Fang, although the proof is independent:
\begin{theorem}[{\cite[Corollary 2]{FangCo}}]\Label{THMFang2}
Let $\varphi$ be a $C^\infty$ volume-preserving quasiconformal Anosov flow.
If $E^s\oplus E^u\in C^1$ and $\dim E^u\ge3$ and $\dim E^s\ge2$ (or $\dim
E^s\ge3$ and $\dim E^u\ge2$), then $\varphi$ is up to finite covers and a
constant change of time scale $C^\infty$ flow equivalent either to the
suspension of a hyperbolic automorphism of a torus, or to a canonical time
change (\Ref{DEFcanonicaltimechange}) of the geodesic flow of a closed
hyperbolic manifold.
\end{theorem}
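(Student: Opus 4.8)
The plan is to combine the orbit-equivalence rigidity of quasiconformal Anosov flows with the regularity of the longitudinal distribution, using the latter to control the time parametrization through exterior calculus of the canonical $1$-form. First I would observe that $E^s\oplus E^u\in C^1$ means the canonical $1$-form $A$ of $\varphi$ is $C^1$, so that $\omega\dfn dA$ is a continuous, closed, $\varphi$-invariant $2$-form with $\iota_X\omega=0$; since $\omega$ is invariant and continuous while $D\varphi^t$ contracts $E^s$ and $D\varphi^{-t}$ contracts $E^u$, the restrictions $\omega\rest{E^s\times E^s}$ and $\omega\rest{E^u\times E^u}$ vanish, so $\omega$ reduces to a pairing of $E^s$ with $E^u$. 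Quasiconformality yields invariant (up to scale) transverse conformal structures on $E^s$ and on $E^u$; comparing their near-invariance with the exact invariance of this pairing and invoking topological transitivity produces a clean dichotomy: either $\omega\equiv 0$, or $\omega$ is nondegenerate on $E^s\oplus E^u$, i.e. $\varphi$ is transversely symplectic.

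In the transversely symplectic branch, Theorem~\Ref{THMFang} applies directly: up to a finite cover, $\varphi$ is $C^\infty$ orbit equivalent to the suspension of a symplectic hyperbolic toral automorphism or to the geodesic flow of a closed hyperbolic manifold. In the branch $\omega\equiv 0$ the form $A$ is closed, whence (up to a finite cover) $\varphi$ is the suspension of an Anosov diffeomorphism $f$; quasiconformality of $\varphi$ descends to $f$, and because $\dim E^u,\dim E^s\ge2$ the invariant transverse conformal structures are $C^\infty$, so smooth rigidity of uniformly quasiconformal Anosov diffeomorphisms identifies $f$, hence $\varphi$, with a hyperbolic toral automorphism up to a $C^\infty$ conjugacy. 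This is the place where only ``quasiconformal'' rather than ``uniformly quasiconformal'' is assumed, and the $C^1$ hypothesis on $E^s\oplus E^u$ is precisely what bridges that gap.

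It then remains to upgrade orbit equivalence to a $C^\infty$ flow equivalence up to a constant rescaling and a canonical time change. Transporting the $C^\infty$ orbit equivalence, one may assume the phase space is that of the model $\psi$ and $X_\varphi=X_\psi/g$ with $g\in C^\infty$ positive; the weak bundles of $\varphi$ and $\psi$ agree while $E^s_\varphi,E^u_\varphi$ lie in $E^s_\psi\oplus\R X_\psi$, $E^u_\psi\oplus\R X_\psi$, so writing the tilt as a $1$-form $\beta$ that kills $X_\psi$ one gets $A_\varphi=g(A_\psi-\beta)$, and $E^s_\varphi\oplus E^u_\varphi\in C^1$ is equivalent to $A_\varphi\in C^1$. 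A refinement of the first step — that any continuous transverse $2$-form invariant under the hyperbolic quasiconformal transverse action of $\psi$ is a \emph{constant} multiple of $dA_\psi$ (both being $0$ in the suspension case) — gives $dA_\varphi=c^{-1}dA_\psi$ for a suitable constant $c>0$, so $\alpha\dfn cA_\varphi-A_\psi$ is a closed $1$-form and $g/c=1+\alpha(X_\psi)$; the reparametrization is thus the constant rescaling by $c$ followed by the canonical time change of $\psi$ by $\alpha$. For a suspension, hyperbolicity of the base automorphism kills $H^1$ of the fiber, so $\alpha=\lambda A_\psi+df_0$, and absorbing $df_0$ into a flow equivalence and $1+\lambda$ into the constant rescaling returns an honest suspension, while for the geodesic flow $\alpha$ can be cohomologically nontrivial and the canonical time change genuinely survives. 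The exterior-calculus manipulations above with the merely $C^1$ form $A_\varphi$ (Cartan's formula, Stokes, solving $d\eta=0$) are valid at this regularity, as recalled in the introduction.

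I expect the hard core to be the analysis of the continuous invariant transverse $2$-form: establishing the suspension-versus-symplectic dichotomy of the first step and, in the third step, that the factor relating $dA_\varphi$ to $dA_\psi$ is a genuine constant rather than merely a measurable flow-invariant function — this is where the \emph{uniformity} concealed in quasiconformality and the ergodicity of the model are indispensable, and it is also where the asymmetric dimension hypotheses $\dim E^u\ge3$ or $\dim E^s\ge3$ enter, both to make the quasiconformal distributions $C^\infty$ and to have the strong form of Anosov-diffeomorphism rigidity at one's disposal. A subsidiary technical point is verifying that ``volume-preserving'' together with $E^s\oplus E^u\in C^1$ really does place one within the scope of those rigidity theorems.
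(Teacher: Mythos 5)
First, a point of comparison: the paper does not prove this statement at all. It is quoted verbatim from Fang \cite{FangCo} (Corollary 2) purely to situate the authors' own work (``our result \dots\ overlaps with a closely related one by Fang, although the proof is independent''), so there is no in-paper proof to measure your proposal against. The paper's own route to rigidity goes through \Ref{THMFang} together with \Ref{THMRigidityGeod} and \Ref{THMRigiditySusp}, which are proved at the weaker Lipschitz regularity via \Ref{THMGeodesicmain} and \Ref{THMSuspensionmain}; your third step (transport the canonical form by the smooth orbit equivalence, show $dA_\varphi$ is a constant multiple of $dA_\psi$, and read off a constant rescaling plus a canonical time-change) is essentially the same skeleton the paper uses to deduce \Ref{THMRigidityGeod} and \Ref{THMRigiditySusp} from its invariant-form theorems.

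The genuine gap is that the two assertions carrying all the weight in your outline are not proved, and one of them is false as stated. You claim (a) a dichotomy ``$dA_\varphi\equiv0$ or $dA_\varphi$ nondegenerate on $E^s\oplus E^u$'' obtained by ``comparing near-invariance of the conformal structures with exact invariance of the pairing'' plus transitivity, and (b) that any continuous invariant transverse 2-form for the model is a \emph{constant} multiple of $dA_\psi$, hence zero in the suspension case. For (b), take the hyperbolic quasiconformal automorphism $B\oplus B\oplus B$ of $\T^6$ (three copies of a hyperbolic $2\times2$ block, so all unstable eigenvalues coincide and the dimension hypotheses $\dim E^u,\dim E^s\ge3$ hold): the constant-coefficient form $v_1^*\wedge w_1^*$ built from the unstable/stable eigencovectors of the first factor is smooth, invariant, nonzero and degenerate, and is not a multiple of $dA_\psi=0$. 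The same example shows that conformal structures plus transitivity cannot by themselves exclude intermediate rank in (a); what excludes it is \emph{exactness} of the form, and proving that an essentially bounded invariant exact 2-form vanishes is precisely the substantive content of \Ref{THMSuspensionmain}, which requires the Jordan-block/eigenfunction analysis and, crucially, mixing rather than mere ergodicity; the geodesic-flow branch likewise needs \Ref{THMGeodesicmain} or Hamenst\"adt's theorem, not conformal-structure bookkeeping. A secondary soft spot: \Ref{THMFang} takes ``transversely symplectic'' as a hypothesis, while your $\omega=dA_\varphi$ is merely continuous (coming from a $C^1$ canonical form), so even entering the symplectic branch needs justification. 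In short, the parts you defer as ``the hard core'' are exactly the theorems that the paper (and Fang) actually have to prove, so the proposal is a plausible reduction but not a proof.
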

\Ref{THMFang} yields ``\ref{itemLipschitz}.$\Rightarrow$\ref{itemRigidity}.''\ in
\Ref{THMMain} due to the following results.
\begin{theorem}\Label{THMRigidityGeod}
Let $M$ be a compact locally symmetric space with negative sectional
curvature and consider a time-change of the geodesic flow whose canonical
1-form is Lipschitz-continuous. Then the canonical form of the time-change
is $C^\infty$, and the time-change is a canonical time-change.
\end{theorem}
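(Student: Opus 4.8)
The plan is to push the problem onto the geodesic flow itself, reduce it to the identification of a single flow-invariant $2$-form, and then bootstrap regularity. Write $\varphi^t$ for the geodesic flow, with generator $X$ and (smooth) canonical $1$-form $A$ as in \Ref{DEFcanonicaltimechange}; since $\varphi^t$ is the Reeb flow of the contact form $A$, one has $\iota_X dA=0$ and $\mathcal L_X A=0$, so $dA$ is $\varphi$-invariant. A time-change is the flow $\psi^t$ generated by $\tilde X=X/g$ for a positive function $g$; it has the same weak stable and unstable subbundles as $\varphi^t$, its strong subbundles $\tilde E^u,\tilde E^s$ are graphs over $E^u,E^s$ inside those weak subbundles, and its canonical $1$-form $\tilde A$ is characterized by $\tilde A(\tilde X)=1$, i.e.\ $\tilde A(X)=g$, together with $\ker\tilde A=\tilde E^u\oplus\tilde E^s$. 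The hypothesis that $\tilde A$ is Lipschitz then forces $g=\tilde A(X)$ to be Lipschitz as well, so $d\tilde A$ is a well-defined essentially bounded $2$-form; throughout, the identities of exterior calculus (Cartan's formula, naturality of $d$, Stokes) are used in the appropriate a.e.\ sense, as is legitimate for Lipschitz forms (cf.\ \cite{Dubrovskiy}).

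First I would show that $d\tilde A$ is invariant under the \emph{geodesic} flow. Since $\tilde X,\tilde E^u,\tilde E^s$ are $\psi^t$-invariant and $\psi^t_*\tilde X=\tilde X$, the form $\tilde A$ is $\psi^t$-invariant, so $\mathcal L_{\tilde X}\tilde A=0$; as $\iota_{\tilde X}\tilde A\equiv1$, Cartan's formula gives $\iota_{\tilde X}d\tilde A=0$, hence $\iota_X d\tilde A=g\,\iota_{\tilde X}d\tilde A=0$, and $\mathcal L_{\tilde X}d\tilde A=0$, i.e.\ $\psi^{t*}d\tilde A=d\tilde A$. Because $\psi^t$ and $\varphi^t$ have the same orbits, $D\psi^t$ and $D\varphi^{\tau(x,t)}$ agree modulo $\R X$, which $d\tilde A$ annihilates; therefore $\varphi^{\tau(x,t)*}d\tilde A=d\tilde A$ for a.e.\ $x$ and all $t$, and since $t\mapsto\tau(x,t)$ is onto $\R$ (as $g$ is bounded above and below), we conclude $\varphi^{s*}d\tilde A=d\tilde A$ for every $s\in\R$.

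The heart of the matter is then the classification: on a compact negatively curved locally symmetric space, a Lipschitz $1$-form whose exterior derivative is $\varphi$-invariant has $d\tilde A=c\,dA$ for a constant $c$. I would argue as follows. Being $\varphi$-invariant, bounded, and annihilated by $\iota_X$, the form $d\tilde A$ is a bounded invariant transverse $2$-form on $E^u\oplus E^s$; since $D\varphi^t$ expands $E^u$ and contracts $E^s$ (uniformly, and conformally in the constant-curvature case), the $E^u\times E^u$ and $E^s\times E^s$ blocks of $d\tilde A$ would grow without bound as $t\to\mp\infty$ unless they vanish identically. Thus $d\tilde A$ has the same type as the transverse symplectic form $dA$, which pairs $E^u$ with $E^s$ nondegenerately, and the bundle endomorphism $T$ with $d\tilde A=dA(T\,\cdot\,,\cdot)$ on $E^u\times E^s$ is $D\varphi^t$-equivariant; ergodicity of $\varphi^t$ makes its conjugacy-invariants constant a.e., and the classification of invariant forms on the locally symmetric space (equivalently, the underlying representation theory) then forces $T$ to be a constant scalar $c$. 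The Lipschitz hypothesis is exactly what makes $d\tilde A$ an honest bounded form to which these pointwise growth and ergodic arguments apply, rather than a mere current, and it is here that exterior calculus of Lipschitz forms is needed.

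Granting this, put $\gamma:=\tilde A-cA$; it is closed and Lipschitz, and $\gamma(X)=g-c$, so $\tilde X=X/g=c^{-1}X/\bigl(1+c^{-1}\gamma(X)\bigr)$. Hence, up to the constant rescaling by $c$, the time-change is the canonical time-change (\Ref{DEFcanonicaltimechange}) associated with the closed $1$-form $\alpha:=c^{-1}\gamma$, with denominator kept positive by positivity of $g$. To see finally that $\tilde A$ — equivalently $\gamma$, hence $g$ and $\alpha$ — is $C^\infty$: because $\ker\tilde A=\tilde E^u\oplus\tilde E^s$, the form $\tilde A$ restricts to $0$ along the strong-stable and strong-unstable leaves of $\psi^t$ and to the flow-time form along its orbits, so it is smooth along each of the three dynamical foliations, and since $d\tilde A=c\,dA$ is already smooth, a Journ\'e-type regularity argument for Anosov systems upgrades $\tilde A$ from Lipschitz to $C^\infty$. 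The main obstacle in all of this is the classification of the previous paragraph: the collapse of the $E^u\times E^u$ and $E^s\times E^s$ blocks is soft, but pinning down the surviving $E^u\times E^s$ part as a \emph{constant} multiple of $dA$ — and not merely a bounded invariant section — draws on the ergodic theory of the geodesic flow and the algebra of the symmetric space, and since every object in sight is only Lipschitz, one must continually verify that the exterior-calculus manipulations and the passage between $d\tilde A$ and invariant sections are legitimate at that regularity.
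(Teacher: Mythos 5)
Your outer reduction coincides with the paper's: the paper also passes from the time-change to the geodesic flow (the canonical form $\tilde A$ is flow-invariant, $\iota_{\tilde X}d\tilde A=0$, and shared orbits give invariance of $d\tilde A$ under $\varphi^t$), and it also concludes by setting $\gamma=\tilde A-\kappa B$ and reading off a canonical time-change after a constant rescaling; that is exactly how \Ref{THMRigidityGeod} is derived from \Ref{THMGeodesicmain}. The genuine gap is at what you yourself call the heart of the matter, namely the claim $d\tilde A\aeq \kappa\,dB$, which is precisely \Ref{THMGeodesicmain} and is not established by ``ergodicity plus representation theory.'' The soft growth/recurrence argument does kill the $E^u\times E^u$, $E^s\times E^s$ and rate-mismatched components, but what survives is only a \emph{measurable}, essentially bounded, $D\varphi^t$-equivariant operator (the paper's $\psi$ with $dA(\xi,\cdot)=dB(\psi(\xi),\cdot)$). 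Ergodicity makes its conjugation invariants (eigenvalues) constant, which yields $\psi\aeq\kappa\Id+N$ with $N$ nilpotent a.e.; it does not exclude a nontrivial nilpotent part, since $N$ commutes with the isometric residual cocycle (after factoring out the exponents $1$ and $2$ on $E^1\oplus E^2$) and so exhibits no growth to exploit, and commutant/representation-theoretic arguments apply to continuous or smooth invariant sections, not to merely measurable ones. Eliminating $N$ is the main effort in the paper: one $L^1$-approximates $N$ by a continuous operator, takes Birkhoff averages (made meaningful by a measurable frame field of $E^u$, parallel for a Kanai connection and adapted to $E^u=E^1\oplus E^2$, so that the scalar ergodic theorem applies), shows the average extends continuously and is parallel along stable and unstable leaves, and then invokes the arguments of \cite{BFL1,BFL} to force it to vanish. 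None of this machinery is replaced by your appeal to ergodicity, so as written the central classification step is asserted rather than proved.

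The final regularity upgrade is also not justified as proposed. That $\tilde A$ pulls back to zero on the strong-stable and strong-unstable leaves of the time-changed flow says nothing about smoothness of the covector field $\tilde A$ \emph{along} those leaves, and those strong foliations are exactly as regular as $\tilde A$ itself, so the Journ\'e-type bootstrap is circular here. Note also that closedness of $\gamma=\tilde A-\kappa B$ alone cannot give smoothness (a closed Lipschitz $1$-form can be $dh$ with $h$ merely $C^{1,1}$), so additional structure of $\tilde A$ must enter; in the paper the $C^\infty$ conclusion is part of \Ref{THMGeodesicmain} and is obtained by a separate, delicate exterior-calculus argument once $d\tilde A\aeq\kappa\,dB$ is known. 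So both the classification of the invariant $2$-form and the smoothness statement need arguments that your proposal does not supply.
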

\begin{theorem}\Label{THMRigiditySusp}
Let $\psi$ be a hyperbolic automorphism of a torus or a nilmanifold
$\Gamma\backslash M$ and consider a time-change of the suspension whose
canonical 1-form is Lipschitz-continuous. Then the canonical form of the
time-change is $C^\infty$, and the time-change is a canonical time-change.
\end{theorem}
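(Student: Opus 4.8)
The plan is to prove the sharper statement that the canonical $1$-form $A$ of the time-change is in fact \emph{closed}; both conclusions then follow with little effort. Realize the suspension as the mapping torus $M_\psi$ of $\psi\colon N\to N$ (with $N$ the torus, or the nilmanifold $\Gamma\backslash M$ of the statement), obtained from $N\times\R$ by identifying $(x,t+1)$ with $(\psi x,t)$, and generated by $X_0=\partial_t$ with canonical form $dt$. The time-change, which I call $\varphi^t$, is generated by $X=X_0/f$ for a smooth $f>0$, and its canonical $1$-form $A$ is determined by $A(X)=1$ together with $\ker A=E^u\oplus E^s$ (the strong bundles of $\varphi^t$), and is Lipschitz by hypothesis. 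The first observation is that $A$ is $\varphi^t$-invariant: for each $t$ the $1$-form $(\varphi^t)^*A$ satisfies the same two conditions, because $X$ and $E^u\oplus E^s$ are flow-invariant, and those conditions pin down a $1$-form uniquely. Hence $\mathcal{L}_X A=0$, and Cartan's formula --- legitimate for the Lipschitz form $A$ in the exterior calculus for Lipschitz forms that we use, see \cite{Dubrovskiy} --- gives $\iota_X\,dA=0$. Thus $dA$ is an essentially bounded, $\varphi^t$-invariant $2$-form that annihilates the flow direction.

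Next I would push $dA$ down to $N$. Take the cross-section $\Sigma=N\times\{0\}$, whose first-return map under $\varphi^t$ is $\psi$ (under the identification $\Sigma\cong N$; a time-change does not alter return maps), and put $\bar A=A|_\Sigma$, a Lipschitz $1$-form on $N$. Writing the return map as $\varphi^{\tau}$ and expanding $\psi^*(dA|_\Sigma)$, the terms carrying $d\tau$ vanish because $\iota_X\,dA=0$, and the remaining term equals $dA|_\Sigma$ by $\varphi^t$-invariance of $dA$; hence $\psi^*(dA|_\Sigma)=dA|_\Sigma$. So $d\bar A=(dA)|_\Sigma$ is a $\psi$-invariant, essentially bounded $2$-form that is moreover the exterior derivative of a Lipschitz $1$-form, and our result that such an invariant $2$-form is smooth --- and, in the special case of an exact one, vanishes --- gives $d\bar A=0$. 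Since in addition $\iota_X\,dA=0$ and $T_pM_\psi=\R X(p)\oplus T_p\Sigma$ for each $p\in\Sigma$, we obtain $dA=0$ along $\Sigma$, and $\varphi^t$-invariance of $dA$ propagates this to all of $M_\psi$. Hence $A$ is a closed Lipschitz $1$-form.

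To finish I would read off the structure from closedness and $A(X)=1$. As $\psi$ is hyperbolic, $1$ is not an eigenvalue of $\psi^*$ on $H^1(N;\R)$, so the Wang exact sequence of the mapping torus gives $H^1(M_\psi;\R)=\R[dt]$; hence $A=c\,dt+dh$ for a constant $c$ and a function $h$ (of class $C^1$, since $dh=A-c\,dt$ is Lipschitz). Evaluating on $X=X_0/f$ yields $f=c+\partial_t h$, so $\partial_t h=f-c$ is smooth; writing $h(x,t)=h_0(x)+\int_0^t\big(f(x,s)-c\big)\,ds$ and imposing $h(x,1)=h(\psi x,0)$ produces the cohomological equation $h_0\circ\psi-h_0=\Delta$ with $\Delta\in C^\infty(N)$ and $h_0$ continuous. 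By the Liv\v{s}ic regularity theorem for the Anosov automorphism $\psi$, $h_0\in C^\infty(N)$, whence $h\in C^\infty(M_\psi)$ and $A=c\,dt+dh$ is $C^\infty$. Finally $X=X_0/(1+\alpha(X_0))$ with $\alpha\dfn(c-1)\,dt+dh$ a closed $C^\infty$ $1$-form and $1+\alpha(X_0)=f>0$, so the time-change is a canonical time-change in the sense of \Ref{DEFcanonicaltimechange}.

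I expect the main obstacle to be the vanishing of $d\bar A$ in the middle step --- the result that an essentially bounded $\psi$-invariant $2$-form is smooth. For a torus this is soft: expanding in Fourier series, invariance carries each coefficient along an orbit of the dual automorphism on the lattice, every nonzero such orbit escapes to infinity in both directions, and Riemann--Lebesgue forces the coefficient to vanish, so the form is constant and, if exact, zero. For a genuine nilmanifold the unipotent part of $\psi$ destroys this clean dichotomy on the non-abelian modes, and the proof instead has to use that $\psi$ is mixing, not merely ergodic, in order to control the resulting parabolic contributions --- precisely the feature flagged in the introduction. A secondary technical point, to be treated with care throughout, is the validity of the exterior calculus for merely Lipschitz forms (Cartan's formula, naturality of $d$ under restriction to $\Sigma$, and the de Rham class of a closed Lipschitz $1$-form), for which we rely on the Stokes-type results for Lipschitz forms cited above.
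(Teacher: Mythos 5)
Your proposal is correct and rests on the same pivot as the paper: everything is reduced to \Ref{THMSuspensionmain}, applied to the almost-everywhere defined, essentially bounded 2-form $dA$, whose exactness clause forces $dA=0$; and your closing identification of the flow as the canonical time-change by $\alpha=A-B$ (here $B=dt$), using $1+\alpha(X_0)=A(X_0)>0$, is verbatim the paper's final step. Where you genuinely add something is the passage from ``$A$ closed'' to ``$A$ smooth'': the paper simply asserts that \Ref{THMSuspensionmain} yields $A$ smooth and closed (with details deferred to \cite{FHIII}), whereas a closed Lipschitz 1-form is a priori only the differential of a $C^{1}$ function with Lipschitz derivative, so some further argument is needed; your route---Wang sequence of the mapping torus (no eigenvalue $1$ of $\psi^*$ on $H^1(N;\R)$ by hyperbolicity) to write $A=c\,dt+dh$, then the cohomological equation $h_0\circ\psi-h_0=\Delta$ with $\Delta\in C^\infty$ and $h_0$ continuous, resolved by Liv\v{s}ic regularity (de la Llave--Marco--Moriy\'on) for the transitive Anosov automorphism---is a legitimate and self-contained way to do this. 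One technical point to repair: $dA$ exists only almost everywhere on the mapping torus, so ``$(dA)|_\Sigma$'' on the measure-zero section is not literally meaningful as you use it, and likewise ``propagating $dA=0$ from $\Sigma$ by invariance'' starts from a null set. Both are easily fixed: work with $\bar A=A|_\Sigma$ directly, noting that the return map $P=\varphi^{\tau}$ satisfies $P^*\bar A=\bar A+d\tau$ (by flow-invariance of $A$ and $A(X)=1$), so $\psi^*d\bar A=d\bar A$ a.e.\ without ever restricting $dA$; and run this on every slice $N\times\{s\}$ (each has return map $\psi$), which together with $\iota_X\,dA=0$ and a Fubini/Rademacher argument gives $dA=0$ a.e.\ on the whole manifold. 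With these routine adjustments, which you yourself flag as points needing care, the argument goes through.
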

Indeed, by \Ref{THMFang}, $\varphi$ is smoothly orbit equivalent to either
the geodesic flow of a real hyperbolic manifold or a suspension of a
symplectic automorphism of an $n$-torus. To show that the flow is, after
rescaling, smoothly \emph{conjugate} to one of these models, use the orbit
equivalence to regard the canonical form for $\varphi$ as an invariant form
for the algebraic system and then apply \Ref{THMRigidityGeod} or
\Ref{THMRigiditySusp}.

After introducing some background on canonical time-changes, we outline how
to establish \Ref{THMRigidityGeod} and \Ref{THMRigiditySusp}.
\section{Canonical time-changes}\Label{Scantimechg}
We make a few remarks here about canonical time-changes
(\Ref{DEFcanonicaltimechange}) because these are not frequently encountered
in the literature; they may serve to show why this is a natural class of
time-changes to expect in rigidity results for flows.
\begin{proposition}[Trivial time-changes]\Label{PRPtrivtimechg}
Consider a flow $\varphi^t$ generated by the vector field $X$ and a smooth
function $f\colon M\to\R$ such that $1+df(X)>0$. Then $\Psi\colon
x\mapsto\varphi^{f(x)}(x)$ conjugates the flow generated by the vector
field $X_f\dfn\dfrac X{1+df(X)}$ to $\varphi^t$.
\end{proposition}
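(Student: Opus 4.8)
The plan is to verify directly that $\Psi$ intertwines the two flows, by computing the derivative of $\Psi$ along orbits of $X_f$ and checking it lands on $X=\dot\varphi$. First I would set up notation: write $\Phi^t_f$ for the (local) flow of the rescaled vector field $X_f=X/(1+df(X))$, and recall that since $M$ is compact and $1+df(X)>0$ everywhere, $1+df(X)$ is bounded below by a positive constant, so $X_f$ is a complete smooth vector field and $\Phi^t_f$ is a genuine flow. The claim to prove is $\Psi\circ\Phi^t_f=\varphi^t\circ\Psi$ for all $t$.

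The key computation is the following. Fix $x\in M$ and consider the curve $t\mapsto\Psi(\Phi^t_f(x))=\varphi^{f(\Phi^t_f(x))}(\Phi^t_f(x))$. Its velocity at $t$ is, by the chain rule applied to $(s,y)\mapsto\varphi^s(y)$,
\[
\frac{d}{dt}\Psi(\Phi^t_f(x))
=\Bigl(\tfrac{d}{dt}f(\Phi^t_f(x))\Bigr)\,X\bigl(\varphi^{f(\Phi^t_f(x))}(\Phi^t_f(x))\bigr)
+D\varphi^{f(\Phi^t_f(x))}\bigl(X_f(\Phi^t_f(x))\bigr).
\]
Now use two facts: first, $\tfrac{d}{dt}f(\Phi^t_f(x))=df(X_f)(\Phi^t_f(x))=\tfrac{df(X)}{1+df(X)}(\Phi^t_f(x))$; second, $X$ is $\varphi$-invariant, so $D\varphi^s(X)=X\circ\varphi^s$, and therefore $D\varphi^{f(\cdot)}(X_f)=\tfrac1{1+df(X)}\,D\varphi^{f(\cdot)}(X)=\tfrac1{1+df(X)}\,X\circ\varphi^{f(\cdot)}$. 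Substituting, the velocity becomes $\Bigl(\tfrac{df(X)}{1+df(X)}+\tfrac1{1+df(X)}\Bigr)X\bigl(\Psi(\Phi^t_f(x))\bigr)=X\bigl(\Psi(\Phi^t_f(x))\bigr)$. Hence $t\mapsto\Psi(\Phi^t_f(x))$ is an integral curve of $X$ starting at $\Psi(x)$, so by uniqueness it equals $\varphi^t(\Psi(x))$, which is the desired conjugacy.

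It remains to note that $\Psi$ is a diffeomorphism; this is where I would spend the most care, since the computation above is otherwise routine. One clean way: the same argument applied to $-f$ and the flow $\varphi^{-t}$, or more directly, observe that $\Psi$ is smooth (composition of smooth maps) and construct its inverse explicitly. Indeed, from the conjugacy $\Psi\circ\Phi^t_f=\varphi^t\circ\Psi$ one sees $\Psi$ maps $X_f$-orbits onto $X$-orbits preserving the time parametrization up to the reparametrization built into $X_f$; setting $t$ so that $\Phi^{t}_f(x)$ traces back, one checks $y\mapsto\Phi^{-g(y)}_f(y)$ is a two-sided inverse for a suitable smooth $g$ (namely $g=f\circ\Psi^{-1}$, obtained by solving along orbits). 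Alternatively, and most economically, note that $\Psi$ is homotopic to the identity along $s\mapsto(x\mapsto\varphi^{sf(x)}(x))$ and is a local diffeomorphism because its derivative is invertible — $D\Psi_x(v)=D\varphi^{f(x)}(v)+df_x(v)X(\Psi(x))$, and since $D\varphi^{f(x)}$ is an isomorphism one checks this is injective using that $X(\Psi(x))=D\varphi^{f(x)}(X(x))$ together with $1+df_x(X(x))>0$. A local diffeomorphism that is a bijection (injectivity from the explicit inverse on orbits, surjectivity since every $\varphi$-orbit is hit) is a diffeomorphism, completing the proof. The main obstacle is thus purely the bijectivity/invertibility bookkeeping; the dynamical content is entirely in the one-line velocity computation.
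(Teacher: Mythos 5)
Your proposal is correct and takes essentially the same route as the paper: the identical two-term chain-rule computation on $x\mapsto\varphi^{f(x)}(x)$, yielding $d\Psi(X)=(1+df(X))\,X\circ\Psi$ and hence $d\Psi(X_f)=X\circ\Psi$, which you simply phrase at the level of integral curves and uniqueness of solutions rather than as a pushforward of vector fields. The care you devote to $\Psi$ being a (global) diffeomorphism is sound and fills in a point the paper asserts in one line from smoothness of $f$ and $1+df(X)>0$.
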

\begin{proof}
Smoothness of $f$ and $1+df(X)>0$ ensure that $\Psi$ is a diffeomorphism.
Now we write $x_t=\varphi^t(x)$ and use the chain rule to compute
\begin{multline*}
d\Psi(X(x))
=
\frac{d\Psi}{dt}
=
\frac{d}{dt}
\varphi^{f(x_t)}(x_t)\at{t=0}
=
\frac{d\varphi}{dt}\at{t=0}df(X)(x)+X(\varphi^{f(x)}(x))
\\=
X(\varphi^{f(x)}(x))\cdot df(X)(x)+X(\varphi^{f(x)}(x))
=
(1+df(X)(x))X(\varphi^{f(x)}(x)),
\end{multline*}
which gives 
$d\Psi(X_f)=X$ upon division by $1+df(X)(x)$.
\end{proof}
\begin{proposition}[Cohomology class]
If $\alpha$ and $\beta$ are cohomologous closed 1-forms with
$1+\alpha(X)>0$ and $1+\beta(X)>0$ then the associated canonical
time-changes of $X$ are smoothly conjugate.
\end{proposition}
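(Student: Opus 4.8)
The plan is to reduce the statement to Proposition~\ref{PRPtrivtimechg} by exploiting the hypothesis that $\alpha$ and $\beta$ are cohomologous. Write $\alpha-\beta=df$ for a smooth function $f\colon M\to\R$. The canonical time-changes replace the generator $X$ by $X_\alpha\dfn X/(1+\alpha(X))$ and $X_\beta\dfn X/(1+\beta(X))$ respectively. I want to produce a diffeomorphism of $M$ conjugating the flow of $X_\alpha$ to the flow of $X_\beta$.

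First I would observe that a canonical time-change is itself a trivial time-change in the sense of Proposition~\ref{PRPtrivtimechg} \emph{relative to the already-time-changed flow}. More precisely, let $\psi^t$ denote the flow generated by $X_\beta$. Then $df(X_\beta)=df(X)/(1+\beta(X))$, and I would check the algebraic identity
\[
1+df(X_\beta)=1+\frac{df(X)}{1+\beta(X)}=\frac{1+\beta(X)+df(X)}{1+\beta(X)}=\frac{1+\alpha(X)}{1+\beta(X)},
\]
using $\alpha=\beta+df$. Since both $1+\alpha(X)>0$ and $1+\beta(X)>0$ by hypothesis, the quantity $1+df(X_\beta)$ is positive, so Proposition~\ref{PRPtrivtimechg} applies to the flow $\psi^t$ and the function $f$: the map $\Psi\colon x\mapsto\psi^{f(x)}(x)$ is a diffeomorphism conjugating the flow of $(X_\beta)_f\dfn X_\beta/(1+df(X_\beta))$ to $\psi^t$. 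Finally I would identify $(X_\beta)_f$ with $X_\alpha$: by the displayed computation,
\[
(X_\beta)_f=\frac{X_\beta}{1+df(X_\beta)}=\frac{X/(1+\beta(X))}{(1+\alpha(X))/(1+\beta(X))}=\frac{X}{1+\alpha(X)}=X_\alpha,
\]
so $\Psi$ conjugates the flow of $X_\alpha$ to the flow of $X_\beta$, which is exactly the assertion.

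The only point requiring a little care — the ``main obstacle,'' though it is mild — is confirming that $f$ may be taken \emph{smooth}: cohomologous closed $1$-forms differ by an exact form, and on a manifold the primitive of an exact smooth $1$-form is smooth (unique up to an additive constant, which does not affect $df$ and only reparametrizes $\Psi$ along orbits), so this is automatic. One should also note that the positivity hypotheses are used precisely to guarantee that $\Psi$ is a genuine diffeomorphism and that the vector fields $X_\alpha$, $X_\beta$ are defined and nonvanishing everywhere. With these remarks the proof is complete.
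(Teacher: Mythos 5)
Your proposal is correct and follows essentially the same route as the paper: both write the difference of the forms as $df$ and use the identity $X/(1+\alpha(X)) = \bigl(X/(1+\beta(X))\bigr)_f$ (the paper does this with the roles of $\alpha$ and $\beta$ interchanged) to reduce to Proposition~\ref{PRPtrivtimechg}. The extra remarks on smoothness of $f$ and the positivity hypotheses are fine but not needed beyond what the paper already assumes.
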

\begin{remark}
This tells us that the cohomology class of $\alpha$ is the material
ingredient in a canonical time-change by $\alpha$.
\end{remark}
\begin{proof}
Writing $\beta=\alpha+df$ with smooth $f$ we observe that
$$
\frac X{1+\beta(X)}
=
\frac X{1+\alpha(X)+df(X)}
=
\frac{\frac X{1+\alpha(X)}}{1+df(\frac X{1+\alpha(X)})}
=
\Big(\frac X{1+\alpha(X)}\Big)_f.
$$
Now use \Ref{PRPtrivtimechg}.
\end{proof}
\begin{proposition}[Regularity]\Label{PRPcanformforcantimechg}
Suppose $X_0$ generates an Anosov flow, and $\alpha$ is a closed 1-form
such that $1+\alpha(X_0)>0$. If $A_0$ denotes the canonical form for $X_0$
then $A\dfn A_0+\alpha$ is the canonical form for
$X\dfn\frac{X_0}{1+\alpha(X_0)}$.
\end{proposition}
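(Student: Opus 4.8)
The plan is to verify the two defining properties of the canonical form of $X$: that $A\dfn A_0+\alpha$ takes the value $1$ on $X$, and that it annihilates the strong subbundles of the flow $\psi^t$ generated by $X$. The first is an immediate pointwise identity: with $g\dfn 1+\alpha(X_0)>0$ one has $X=X_0/g$, hence $A_0(X)=1/g$ and $\alpha(X)=\alpha(X_0)/g$, whose sum is $\bigl(1+\alpha(X_0)\bigr)/g=1$. I also recall that a reparametrization of an Anosov flow by a positive smooth function is again Anosov (see \cite{KatokHasselblatt}), so that $\psi^t$ carries a hyperbolic splitting $TM=\R X\oplus E^u\oplus E^s$ and the phrase ``canonical form of $X$'' is meaningful; I write $\varphi^t$ for the flow of $X_0$ and $E^u_0,E^s_0$ for its strong subbundles.

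For the annihilation property I would show that the continuous codimension-one distribution $D\dfn\ker A$ is invariant under $D\psi^t$. Writing $\psi^t(y)=\varphi^{\sigma(y,t)}(y)$ with $\sigma(\cdot,0)\equiv0$ and $\partial_t\sigma(y,t)=1/g(\psi^ty)$, the chain rule gives, for a vector $v=aX_0(x)+w_u+w_s$ decomposed along the splitting of $X_0$ ($w_u\in E^u_0(x)$, $w_s\in E^s_0(x)$),
\[
D_x\psi^t(v)=D\varphi^{\sigma(x,t)}(w_u+w_s)+\bigl(a+d\sigma_x(v)\bigr)\,X_0(\psi^tx),
\]
and, since $A_0$ is $1$ on $X_0$ and kills $E^u_0\oplus E^s_0$,
\[
A\bigl(D_x\psi^t(v)\bigr)=\bigl(a+d\sigma_x(v)\bigr)\,g(\psi^tx)+\alpha\bigl(D\varphi^{\sigma(x,t)}(w_u+w_s)\bigr)\nfd F(t).
\]
The key point, and the main obstacle, is that $F$ is constant in $t$: differentiating and using $\partial_t\bigl(a+d\sigma_x(v)\bigr)=-g^{-2}\,(dg)\bigl(D\psi^tv\bigr)$, $\frac{d}{dt}\,g(\psi^tx)=g^{-1}(dg)(X_0)$ along the orbit, and --- this is where closedness of $\alpha$ enters --- $\mathcal L_{X_0}\alpha=d\,i_{X_0}\alpha=dg$, one finds that all terms cancel identically, so $F(t)\equiv F(0)=A(v)$. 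Therefore $v\in D$ forces $A\bigl(D\psi^tv\bigr)=0$ for all $t$, i.e.\ $D$ is $D\psi^t$-invariant. Note that $A_0$ enters this computation only algebraically and is never differentiated, which is essential since it is in general merely continuous.

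It remains to identify $D$. Since $A(X)=1\neq0$ we have $X\notin D$, so $D\cap(E^u\oplus\R X)$ is a continuous, $D\psi^t$-invariant subbundle complementary to $\R X$ inside $E^u\oplus\R X$; writing it as a graph $\{\,w+\ell(w)X:w\in E^u\,\}$, invariance gives $\ell_{\psi^{-t}x}\bigl(D\psi^{-t}w\bigr)=\ell_x(w)$, and as $t\to+\infty$ the left side tends to $0$ by hyperbolic contraction while $\|\ell\|_\infty<\infty$ by compactness of $M$; hence $\ell\equiv0$ and $E^u\subset D$. Symmetrically $E^s\subset D$, and since $D$ and $E^u\oplus E^s$ both have codimension one they coincide. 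Thus $A=A_0+\alpha$ is $1$ on $X$ and vanishes on $E^u$ and $E^s$, so it is the canonical form of $X$ --- which in particular is exactly as regular as $A_0$.
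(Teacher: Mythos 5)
Your proof is correct, but it follows a genuinely different route from the paper's. The paper argues in two lines of exterior calculus: since $\alpha$ is closed, $dA=dA_0$; together with $A(X)=1$, Cartan's formula gives $\Lie_XA=i_XdA=\tfrac1{1+\alpha(X_0)}i_{X_0}dA_0=0$, so $A$ is an invariant $1$-form with $A(X)=1$, and it then invokes the Feres--Katok lemma (a continuous invariant $1$-form vanishing on the generator is trivial) to conclude that $A$ is the canonical form of $X$. You instead establish the invariance $(\psi^t)^*A=A$ by differentiating along orbits of the reparametrized flow, and your accounting is right: the derivative of the coefficient of $X_0$ and the term coming from $\Lie_{X_0}\alpha=d\,i_{X_0}\alpha=dg$ cancel exactly, so $F'(t)\equiv0$ as claimed. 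The payoff of your version is that only $\alpha$, $g$ and the reparametrization $\sigma$ are ever differentiated, never $A_0$, which is in general merely H\"older continuous; the paper's formal step $dA=dA_0$ (and $\Lie_XA$) has to be read in the low-regularity sense that is a theme of the paper, whereas your computation is classical at the stated regularity. Your final identification of $\ker A$ with $E^u\oplus E^s$ by the graph/contraction argument is essentially an inlined proof of the special case of the cited lemma (note it simplifies further: since $A(X)=1$, your graph map is just $\ell_x(w)=-A_x(w)$, so for $w\in E^u$ invariance gives $A(w)=A(D\psi^{-t}w)\to0$ directly). What the paper's route buys is brevity and reuse of a known lemma; what yours buys is self-containedness and no differentiation of the nonsmooth canonical form. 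One small point you rightly flag and handle by citation, as the paper implicitly does too, is that the time-changed flow is again Anosov, so that ``the canonical form of $X$'' is well defined.
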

\begin{remark}
This shows, in particular, that canonical time-changes with smooth closed
forms do not affect the regularity of the canonical form.
\end{remark}
\begin{proof}
We first note that two invariant 1-forms for an Anosov flow are
proportional: Both being constant on $X$, this follows from the fact that a
continuous 1-form that vanishes on $X$ is trivial \cite[Lemma 1]{FK}.

Since $\alpha$ is closed we have $dA=dA_0$. Also
$$
A(X)=\frac{A_0(X_0)+\alpha(X_0)}{1+\alpha(X_0)}=1,
$$ which implies that $\Lie_XA=0$, \ie $A$ is $X$-invariant and hence
proportional to the canonical 1-form of $X$. But
$A(X)=1$ then implies that $A$ is equal to the canonical 1-form of $X$.
\end{proof}
\section{Rigidity results}
The results that imply \Ref{THMRigidityGeod} and \Ref{THMRigiditySusp} are
of independent interest and will therefore appear in a separate publication
from the application \cite{FHIII} to quasiconformal Anosov flows.
\begin{theorem}\Label{THMGeodesicmain}
Let $M$ be a compact locally symmetric space with negative sectional
curvature and suppose $A$ is a Lipschitz-continuous 1-form such that
$dA$ is invariant under the geodesic flow. Then $A$ is
$C^\infty$, and indeed $dA$ is a constant multiple of the exterior
derivative of the canonical 1-form for the geodesic flow.
\end{theorem}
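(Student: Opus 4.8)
The plan is to pass to the unit tangent bundle $SM$, where the geodesic flow $\varphi^t$, with generator $X$, is a contact Anosov flow: its canonical $1$-form $A_0$ is the (smooth) Liouville contact form, $X$ is the Reeb field of $A_0$ (so $\iota_X dA_0=0$ and $\Lie_X A_0=0$), the smooth distribution $\ker A_0$ equals $E^u\oplus E^s$, and $dA_0$ restricts to a symplectic form on it for which $E^u$ and $E^s$ are Lagrangian. The first step is to extract the algebraic shape of the invariant, essentially bounded $2$-form $dA$. Because $dA$ is $\varphi$-invariant, $\iota_X dA$ is a $\varphi$-invariant, bounded $1$-form that annihilates $X$, and the contraction/expansion argument — flowing forward on $E^s$, backward on $E^u$, using the bound, exactly as in the essentially-bounded version of \cite[Lemma~1]{FK} already invoked in \Ref{Scantimechg} — forces $\iota_X dA=0$ a.e. The same argument applied to $dA_p(Y,Z)=dA_{\varphi^tp}(D\varphi^tY,D\varphi^tZ)$ with $Y,Z\in E^u$ (let $t\to-\infty$) and with $Y,Z\in E^s$ (let $t\to+\infty$) shows that $dA$ vanishes a.e.\ on $E^u\times E^u$ and on $E^s\times E^s$. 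So, almost everywhere, $dA$ is carried entirely by the $E^u$--$E^s$ pairing, which is precisely the algebraic shape of $dA_0$.

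The heart of the proof is to identify that pairing. Nondegeneracy of $dA_0$ on $\ker A_0$ lets me write $dA(\cdot,\cdot)=dA_0(\Phi\cdot,\cdot)$ for a measurable, essentially bounded field $\Phi$ of endomorphisms of $\ker A_0$; invariance of both forms gives $\Phi_{\varphi^tp}=D\varphi^t\,\Phi_p\,(D\varphi^t)^{-1}$, and the vanishing above shows $\Phi$ preserves $E^u$ and $E^s$. Here the locally symmetric hypothesis is decisive: the restricted root decomposition splits $E^s$ into smooth flow-invariant subbundles on each of which $D\varphi^t$ is $e^{-\rho t}$ times a parallel-transport map orthogonal in the Sasaki metric, and essential boundedness of $\Phi$ forces it to respect this grading. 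On each graded piece the equivariance of $\Phi$ is then conjugation by an orthogonal cocycle, so every conjugacy invariant of $\Phi$ there (its characteristic polynomial and its Hilbert--Schmidt norm, say) is a $\varphi$-invariant bounded function, hence constant by ergodicity of the geodesic flow. To promote ``constant spectrum'' to ``$\Phi$ is a constant scalar'' I would invoke ergodicity of the parallel-transport $\mathrm{SO}$-extension of the geodesic flow (the frame flow), which for a locally symmetric space is Moore's theorem for $\Gamma\backslash G$ with $G$ the noncompact simple isometry group: it precludes any nontrivial measurable invariant subbundle or reduction of structure for the parallel-transport cocycle, whereas a nonscalar $\Phi$ on a graded piece would produce one. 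Thus $\Phi$ is a constant scalar on each graded piece, and closedness of $dA$ forces these scalars to agree (the graded components of $dA_0$ are not separately closed once $E^u$ is nonabelian); hence $dA=c\,dA_0$ with $c$ constant. In the real hyperbolic case there is a single root, this last point is vacuous, and one may alternatively obtain $dA=c\,dA_0$ directly from Moore's theorem — which in addition shows $dA$ is smooth, $G$-invariant, and unique up to scale.

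It remains to upgrade the regularity of $A$ itself. With $dA=c\,dA_0\in C^\infty$ in hand, $\omega\dfn A-cA_0$ is a closed, Lipschitz $1$-form, so by the exterior calculus and the Stokes theorem for Lipschitz forms \cite{Dubrovskiy} it represents a de Rham class; replacing $\omega$ by its smooth (harmonic) representative is a smooth conjugacy of the associated flow, by the proposition on cohomology classes in \Ref{Scantimechg}, and after it $A$ is $C^\infty$ and visibly of the form $cA_0+(\text{smooth closed})$. In the setting of \Ref{THMRigidityGeod}, where $A$ is the canonical $1$-form of a time-change, so that $A(X)>0$ and $\Lie_Y A=0$ for the reparametrized generator $Y$, this exhibits the time-change — after the constant rescaling by $c$ — as a canonical one and pins $A$ down completely.

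The step I expect to be the main obstacle is the passage from ``$\Phi$ has flow-constant spectrum on each root subbundle'' to ``$\Phi$ is a constant scalar'': this is the one place where the algebraic, locally symmetric nature of $M$ is genuinely indispensable — through the absence of measurable invariant subbundles for the parallel-transport/frame cocycle — and the one place where the argument is sensitive to which rank-one space one is treating. By comparison, the initial reduction, the exterior calculus for merely Lipschitz forms, and the final identification of the time-change are routine.
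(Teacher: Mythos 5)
Your setup---the duality field $\Phi$ with $dA(\cdot,\cdot)=dA_0(\Phi\cdot,\cdot)$, the vanishing of $\iota_XdA$ and of $dA$ on $E^u\times E^u$ and $E^s\times E^s$, and the compatibility of $\Phi$ with the root grading $E^u=E^1\oplus E^2$---matches the paper's (its operator $\psi\aeq\kappa\Id+N$). The genuine gap is exactly at the step you flag as the main obstacle. First, the ``parallel-transport $\mathrm{SO}$-extension'' is \emph{not} ergodic for the non-real-hyperbolic rank-one locally symmetric spaces: parallel transport preserves the complex (resp.\ quaternionic, octonionic) structure, so the full frame flow admits a smooth reduction of structure group; what Moore's theorem gives is ergodicity of the flow on $\Gamma\backslash G$, i.e.\ of the $M$-extension with $M$ the compact centralizer of the flow in $K$. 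Second, even with the correct extension, the mechanism ``no measurable invariant reduction $\Rightarrow\Phi$ scalar'' only shows that $\Phi$ agrees a.e.\ with a constant element of the commutant of the $M$-representation on each root space, and that commutant is not always $\R$: for complex hyperbolic manifolds it is $\C$ on $\mathfrak{g}_\alpha$ (the complex structure $J$ commutes with the $U(n-1)$-cocycle, and has constant spectrum and constant Hilbert--Schmidt norm), and likewise for $\mathrm{SO}(2)$ in low dimensions. So constant-spectrum/ergodicity arguments cannot exclude a component of the form $dA_0(J\cdot,\cdot)$; ruling it out requires using closedness/exactness of $dA$ \emph{within} a graded piece, whereas your only use of closedness is to equate the scalars \emph{across} the two root spaces. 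Until that is supplied, $dA\aeq\kappa\,dA_0$ is only proved for real hyperbolic manifolds of dimension at least $4$. Note also that constancy of the characteristic polynomial and of the norm is compatible with a nontrivial nilpotent part of $\Phi$ on a block, so everything hinges on the commutant step that breaks.

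For comparison, the paper's route is genuinely different at this point and uses only ergodicity of the geodesic flow: it first shows the semisimple part of $\psi$ is a constant $\kappa$, and then the main effort is to kill the nilpotent part $N$ by $L^1$-approximating it with a continuous operator, Birkhoff-averaging that approximation componentwise in a measurable frame that is parallel for a Kanai connection along nonperiodic unstable leaves (so that $\gamma^t(\xi)=e^{jt}\xi$), showing the average extends continuously and is parallel along $W^u$ and $W^s$, and then invoking the arguments of \cite{BFL1,BFL} to force it to vanish; no frame-flow or Moore-type input appears. Separately, your final regularity upgrade is too quick: a closed Lipschitz $1$-form need not be smooth (e.g.\ $dg$ with $g\in C^{1,1}\setminus C^2$), so $dA=\kappa\,dA_0$ alone does not give $A\in C^\infty$; the cohomology-class proposition you invoke requires a smooth primitive $f$, which you do not have, and what actually pins $A$ down in the intended application is the normalization of the canonical form of the time-change ($A(X)=1$, $E^u\oplus E^s\subset\ker A$) together with the paper's ``delicate exterior calculus.''
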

\begin{remark}\Label{REMessbdd}
Note that the Lipschitz assumption ensures that $dA$ is defined almost
everywhere and essentially bounded \cite{GKS}. This is all we use. For
comparison, we state an earlier result of Hamenst\"adt: 
\end{remark}
\begin{theorem}[{\cite[Theorem A.3]{Hamenstadt}}]
If the Anosov splitting of the geodesic flow of a compact negatively curved
manifold is $C^1$ and $A$ is a $C^1$ 1-form such that $dA$ is invariant,
then $dA$ is proportional to the exterior derivative of the canonical
1-form of the geodesic flow.
\end{theorem}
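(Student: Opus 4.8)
The plan is to use the hyperbolicity of the geodesic flow to peel off all but one ``component'' of $dA$, to encode the surviving piece as a flow-invariant endomorphism of the strong unstable bundle $E^u$, and then to bring in closedness of $dA$ — together with the $C^1$ regularity of the Anosov splitting — to force that endomorphism to be a constant multiple of the identity. Write $SM$ for the unit tangent bundle, $X$ for the geodesic generator and $A_0$ for the canonical $1$-form, so that $A_0$ is a contact form, $dA_0$ restricts to the transverse symplectic form on $\ker A_0=E^u\oplus E^s$, and — as is standard for geodesic flows — $E^u$ and $E^s$ are Lagrangian complements for $dA_0$ while $X$ is the Reeb field (so $\iota_X dA_0=0$).

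First I would show that $dA$ is semibasic and isotropic, i.e.\ $\iota_X dA\equiv 0$ and $dA$ vanishes on $E^u\times E^u$ and on $E^s\times E^s$. Since $A\in C^1$, the $2$-form $dA$ is continuous, hence bounded on the compact manifold $SM$, and $\norm{X}$ is bounded; invariance of $dA$ together with the fact that $D\varphi^t$ maps $X$ to $X$ gives $dA_{\varphi^t x}(X,D\varphi^t v)=dA_x(X,v)$, whose left side is $O(\norm{D\varphi^t v})$. Letting $v\in E^u$ with $t\to-\infty$, or $v\in E^s$ with $t\to+\infty$, and using $dA(X,X)=0$, yields $\iota_X dA=0$; the same estimate applied to both arguments gives the vanishing on $E^u\times E^u$ and $E^s\times E^s$.

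Next, because $E^u$ and $E^s$ are Lagrangian complements in the symplectic bundle $(\ker A_0,dA_0)$, the pairing $dA_0\colon E^u\times E^s\to\R$ is perfect and identifies $E^s$ with $(E^u)^*$. By the previous step $dA$ is determined by its restriction to $E^u\times E^s$, so there is a continuous section $\Phi$ of $\operatorname{End}(E^u)$ with $dA(u,v)=dA_0(\Phi u,v)$ for $u\in E^u$, $v\in E^s$; invariance of $dA$ and of $dA_0$ then becomes the commutation relation $D\varphi^t\circ\Phi=\Phi\circ D\varphi^t$ on $E^u$. Using again that $E^u$, $E^s$ are Lagrangian for $dA_0$ and that $\iota_X dA_0=0$, one checks that $dA=c\,dA_0$ on all of $TSM$ is equivalent to $\Phi=c\cdot\Id$. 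The theorem is thereby reduced to showing that the flow-invariant endomorphism $\Phi$ is a constant scalar.

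The main obstacle is exactly this last step, where closedness of $dA$ and the $C^1$ hypothesis are indispensable. A soft first remark is that the characteristic polynomial of $\Phi_x$ has continuous, flow-invariant, hence (topological transitivity of the geodesic flow) constant coefficients, so $\operatorname{tr}\Phi$ and $\det\Phi$ are constants; but this does not yet give $\Phi=c\,\Id$, since, for instance, for the geodesic flow of a complex hyperbolic manifold $E^u$ carries a flow-invariant splitting and hence a whole plane of flow-invariant endomorphisms — only the scalars among which yield a closed $2$-form. To rule out the rest I would use $d(dA)=0$: because the splitting is $C^1$ one can carry out exterior calculus in adapted $C^1$ frames (the invariance of $dA$ also forces extra regularity along the strong foliations), and expanding $d(dA)=0$ on triples taken from $E^u$ and from $E^s$, while using involutivity of $E^u$ and $E^s$, turns these identities into first-order transport equations for $\Phi$ along the strong unstable and strong stable leaves; together with the commutation relation above, which transports $\Phi$ along the flow, this determines $\Phi$ on a neighbourhood of any point from its value there. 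Thus $\Phi$ is a global parallel section for the flat transport assembled from these equations; since $\Id$ (which produces $dA_0$) is one such section, the content of the theorem is precisely that this space of sections is one–dimensional, equivalently that $dA_0$ spans, up to scale, the closed flow-invariant $2$-forms annihilating $X$. Establishing this — killing the ``extra'' invariant endomorphisms, such as those coming from the complex hyperbolic splitting — is where I expect the real work to lie. (In the locally symmetric setting one has no $C^1$ splitting to lean on and must instead exploit the homogeneous structure; it is the $C^1$ hypothesis on the splitting that makes the direct argument above available.)
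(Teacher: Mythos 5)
You have reproduced correctly the standard first half of such an argument: invariance plus boundedness of the continuous form $dA$ forces $\iota_XdA=0$ and the vanishing of $dA$ on $E^u\times E^u$ and $E^s\times E^s$, and the Lagrangian pairing $dA_0\colon E^u\times E^s\to\R$ then encodes $dA$ as a continuous section $\Phi$ of $\operatorname{End}(E^u)$ commuting with $D\varphi^t$, so that the theorem becomes the assertion $\Phi=c\,\Id$. (This is exactly the reduction the paper itself sketches for its related Theorem~\ref{THMGeodesicmain}, where the operator $\psi$ with $dA(\xi,\cdot)=dB(\psi(\xi),\cdot)$ plays the role of your $\Phi$; note that the statement you were asked about is quoted from Hamenst\"adt and is not proved in this paper, so the only available comparison is with that sketch.)

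The genuine gap is the step you yourself flag: you never prove that $\Phi$ is a scalar, and the route you indicate cannot, as described, close it. Topological transitivity only gives constancy of spectral data, and your proposed use of $d(dA)=0$ is an unexecuted plan: you assert that expanding $d(dA)=0$ on triples from $E^u$ and $E^s$ yields transport equations making $\Phi$ a parallel section of some flat transport, and then concede that one still must show the space of such sections is one-dimensional --- which is precisely the theorem. Worse, the example you mention defeats the argument at the level of generality at which you leave it: for complex hyperbolic manifolds the invariant splitting of $E^u$ produces nonscalar, smooth, flow-invariant endomorphisms that are parallel along leaves in every natural sense, so they satisfy the commutation relation and any soft parallelism constraints; excluding them requires using closedness of the associated $2$-form in an essential, quantitative way (identities mixing $E^u$, $E^s$ and $X$ directions, Hamenst\"adt's actual argument, or, in the locally symmetric analogue treated in this paper, writing $\psi\aeq\kappa\Id+N$ and proving $N=0$ via $L^1$-approximation, Birkhoff averaging and the Benoist--Foulon--Labourie rigidity machinery). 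Until that step is carried out, the proposal establishes only the (routine) reduction, not the result.
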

\begin{proof}[Proof of\/ \Ref{THMRigidityGeod} from \Ref{THMGeodesicmain}]
The hypotheses of\/ \Ref{THMRigidityGeod} and of \Ref{THMGeodesicmain} imply
smoothness; we need to show that the vector field $X$ that generates the
time-change agrees with a canonical time-change of a constantly scaled
version of $X_0$, where $X_0$ generates the geodesic flow. Rescale $X_0$ to
$X_0/\kappa$, where $\kappa\in\R$ is defined by $dA=\kappa dB$ and then apply
the canonical time-change defined by the 1-form $\overbar\alpha\dfn A-\kappa
B$. Since the resulting vector field $\dfrac{X_0}{\kappa+\overbar\alpha(X_0)}$
is a scalar multiple of $X$, the claim follows from
$$
A\Big(\frac{X_0}{\kappa+\overbar\alpha(X_0)}\Big)
=
\frac{A(X_0)}{\kappa B(X_0)+(A-\kappa B)(X_0)}
=1,
$$
where we used $B(X_0)=1$. That the last denominator is $A(X_0)$ and hence
positive justifies the use of $\overbar\alpha$ to define a canonical time-change.
\end{proof}
\begin{theorem}\Label{THMSuspensionmain}
Let $\psi$ be a hyperbolic automorphism of a torus or a infranilmanifold
$\Gamma\backslash M$. Then any essentially bounded invariant 2-form is
almost everywhere equal to an $M$-invariant (hence smooth) closed 2-form.

If, in addition, the form is exact, then it vanishes almost everywhere.
\end{theorem}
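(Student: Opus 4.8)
The plan is to work on the universal cover, where the infranilmanifold automorphism is realized by a hyperbolic automorphism $\psi$ of a simply connected nilpotent Lie group $N$ (or of $\R^n$ in the torus case), and to exploit the spectral decomposition of the induced map $\psi^*$ on the finite-dimensional space of left-invariant $2$-forms together with mixing to kill all Fourier modes except the constant one. First I would fix a basis of left-invariant $1$-forms on $N$ and write an arbitrary essentially bounded invariant $2$-form $\omega$ as $\omega=\sum_{I}\omega_I\,e_I$, where $e_I$ ranges over left-invariant $2$-forms and the coefficients $\omega_I$ are essentially bounded functions on the compact quotient. Invariance $\psi^*\omega=\omega$ then becomes a coupled system relating the pulled-back coefficients $\omega_I\circ\psi$ to linear combinations $\sum_J c_{IJ}\omega_J$, where the matrix $(c_{IJ})$ is the action of $D\psi$ on $\Lambda^2$ of the Lie algebra. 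Since $\psi$ is hyperbolic, the eigenvalues of $D\psi$ on the Lie algebra are off the unit circle; however — and this is the crucial subtlety the abstract flags — on $\Lambda^2$ one can have eigenvalues of modulus one (a product of a contracting and an expanding eigenvalue), and the Jordan blocks coming from the nilpotent structure are exactly the ``parabolic effects due to the nilpotent part'' mentioned in the introduction. This is why mixing, not mere ergodicity, is needed.

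The key steps, in order, are: (1) pass to the universal cover and decompose $\omega$ into left-invariant-form components with $L^\infty$ coefficients on the quotient; (2) for each component function, use the mixing property of $\psi$ to control its behavior under iteration — concretely, test $\omega_I$ against smooth functions and push forward, using that correlations decay, to show that the only way an essentially bounded function can satisfy the cohomological equation $\omega_I\circ\psi=\sum_J c_{IJ}\omega_J$ with the transfer-operator-type coupling is to be (a.e.) constant; (3) handle the components on which $(c_{IJ})$ acts with eigenvalue of modulus $\ne1$ first — here iterating forward or backward forces the coefficient to be a.e.\ zero (if the eigenvalue expands, $\|\omega_I\circ\psi^n\|_\infty$ would blow up; if it contracts, running $\psi^{-n}$ does) — reducing to the ``neutral'' block; (4) on the neutral block, where the eigenvalues have modulus $1$ but the action is a unipotent times a rotation, use mixing to show the $L^2$-limit of the ergodic averages is the constant, and that the unipotent part cannot be reconciled with boundedness unless the coefficients are constant, so the nilpotent Jordan blocks are forced to act trivially on the surviving component. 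The outcome is that $\omega$ agrees a.e.\ with a left-invariant (hence $M$-invariant, hence $C^\infty$) $2$-form; closedness is automatic because an $M$-invariant form on a nilmanifold lies in the Chevalley–Eilenberg complex and $d$ commutes with the invariance, and one checks directly that the surviving invariant form is closed (in fact the hyperbolicity plus invariance forces $d\omega$, which is itself an invariant $3$-form with the analogous spectral constraints, to vanish).

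For the second assertion, suppose in addition $\omega=d\eta$ for a Lipschitz-continuous $1$-form $\eta$; by the first part $\omega$ is a.e.\ equal to a smooth $M$-invariant closed $2$-form $\omega_0$. I would show $\omega_0=0$ by a pairing/averaging argument: integrate $\omega_0$ against the smooth invariant volume, or rather observe that $[\omega_0]$ must be a $\psi^*$-invariant class in $H^2$ of the nilmanifold, while Stokes' theorem for Lipschitz forms \cite{Dubrovskiy} gives $\int_\Sigma \omega_0=\int_\Sigma d\eta=\int_{\partial\Sigma}\eta$ over any Lipschitz $2$-cycle $\Sigma$, which vanishes, so $[\omega_0]=0$ in de Rham cohomology; but an $M$-invariant closed form on a nilmanifold that is exact must be zero (the invariant forms inject into cohomology by Nomizu's theorem), hence $\omega_0=0$ and $\omega=0$ a.e.

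I expect the main obstacle to be step (4): controlling the unipotent (parabolic) Jordan blocks on the neutral subspace of $\Lambda^2$. For a genuine torus automorphism $D\psi$ is semisimple and this difficulty is absent, so the torus case is essentially the decay-of-correlations argument alone; but for a true nilmanifold the interplay between the polynomial growth coming from the nilpotent part and the requirement that all iterates stay uniformly bounded in $L^\infty$ is delicate, and making the mixing input quantitative enough to defeat the polynomial factors — rather than just qualitative ergodicity — is the heart of the matter.
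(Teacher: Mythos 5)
For the first assertion your route is essentially the paper's: expand $\omega$ in an invariant frame, put the induced action on $\Lambda^2$ in Jordan form, and play essential boundedness of $\Omega(\psi^n(x))={}^t\!A^n\Omega(x)A^n$ against the eigenvalue and block structure. One clarification: no quantitative mixing is needed to ``defeat the polynomial factors.'' For a.e.\ fixed $x$ each block entry has the form $(\lambda_i\lambda_j)^nP_{ijx}(n)$ with $P_{ijx}$ a polynomial in $n$, so the $L^\infty$ bound alone forces $P_{ijx}$ to be constant (and forces the block to vanish when $|\lambda_i\lambda_j|\neq1$); after that each surviving entry satisfies $\Omega_{ij}\circ\psi=(\lambda_i\lambda_j)\Omega_{ij}$, i.e.\ it is a measurable eigenfunction, and mixing enters only qualitatively, to exclude non-constant eigenfunctions with unimodular eigenvalue $\neq1$. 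Also, your parenthetical justification of closedness---that hyperbolicity kills any invariant $3$-form---is not valid as stated: products of three eigenvalues can equal $1$, so $\psi$-invariant left-invariant $3$-forms need not vanish, and closedness of the limiting invariant form requires a separate argument.

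The genuine gap is in the second assertion. From Lipschitz Stokes you correctly get $[\omega_0]=0$ in de Rham cohomology, but the step ``an $M$-invariant closed form that is exact must be zero because invariant forms inject into cohomology by Nomizu's theorem'' is false on any non-abelian nilmanifold: Nomizu's theorem says the Chevalley--Eilenberg complex of invariant forms \emph{computes} $H^*(\Gamma\backslash N)$, so an exact invariant $2$-form is only exact \emph{within} the invariant complex, $\omega_0=d\beta$ with $\beta$ invariant, and such forms are typically nonzero (for any non-abelian nilpotent algebra there is an invariant $1$-form $\beta$ with $d\beta\neq0$; the Heisenberg algebra already shows this). Thus your argument proves the second assertion only in the torus case. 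What is missing is the use of $\psi$-invariance to produce a $\psi$-invariant primitive: the paper writes $\omega=d\alpha$, averages the coefficients of $\alpha$ over $\Gamma\backslash N$ with respect to the invariant frame and volume (averaging commutes with $d$), deduces that $\psi_*\overline{\alpha}-\overline{\alpha}=df$ for some $f$, and then, since the left side has constant coefficients while $\int df(X_j)\,d\!\vol=0$, concludes that $\overline{\alpha}$ is a $\psi$-invariant $1$-form; hyperbolicity ($\psi^*$ has no eigenvalue $1$ on $1$-forms) gives $\overline{\alpha}=0$ and hence $\omega=d\overline{\alpha}=0$. Equivalently, in your setup you could correct the Nomizu primitive $\beta$ by a closed invariant $1$-form, using invertibility of $\Id-\psi_*$ on invariant $1$-forms, to make it $\psi$-invariant and then kill it by hyperbolicity; either way, cohomological triviality alone does not suffice.
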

\begin{remark}
We point out that in this proof we use that the automorphism is mixing
(rather than just ergodic). The need for this is an interesting side-light
on how parabolic effects enter into our considerations.
\end{remark}
\begin{proof}[Proof of \Ref{THMRigiditySusp} from \Ref{THMSuspensionmain}]
Denote by $A$ the canonical form of the time-change and by $B$ the
canonical form of the suspension. \Ref{THMSuspensionmain} applied to $A$
implies that $A$ is smooth and closed, and hence so is $\alpha\dfn A-B$
since $B$ is also closed. Writing $X_0$ for the suspension vector field we
find that the canonical time-change
$$
X\dfn\frac{X_0}{1+\alpha(X_0)}=\frac{X_0}{A(X_0)}
$$
of $X_0$ is the given vector field since by construction $A(X)\equiv1$.
\end{proof}
\section{Proof of \Ref{THMGeodesicmain}}
Using exterior calculus (carefully!) we show that $A$ is a contact form,
\ie $A\wedge\bigwedge_{i=1}^ndA$ is a volume, and that $X_0$ is in the
kernel of $dA$. By duality, for every $\xi$ there is a $\psi(\xi)$ such that 
$$dA(\xi,\cdot)=dB(\psi(\xi),\cdot);$$this is defined whenever $dA$ is, and
we choose $\psi(X_0)=X_0$ and $\psi(\ker B)\subset\ker B$. We next show
that $\psi\aeq \kappa\Id+N$, where $\kappa\in\R\smallsetminus\{0\}$ and $N$
is a nilpotent operator. The main effort is now directed at showing that
$N=0$, \ie that $dA\aeq \kappa\,dB$ (smoothness of $A$ can then be obtained
via some delicate exterior calculus). To that end we $L^1$-approximate $N$
by a continuous operator, take the Birkhoff average of this approximation
(which does not change the $L^1$-distance to $N$) and show that it is
defined and continuous everywhere and so intertwined with the flow that one
can apply arguments from \cite{BFL1,BFL} to conclude that it vanishes
identically. Thus $N$ is arbitrarily $L^1$-close to 0 and hence vanishes
itself.

Taking Birkhoff averages of the continuous $L^1$-approximation $F$ of $N$
requires substantial technical underpinnings because the Birkhoff Ergodic
Theorem applies to scalar functions. Therefore we show that we can choose a
measurable orthonormal frame field for $E^u$ on $M$ that consists of vector
fields $\xi$ chosen in such a way that
\begin{itemize}
\item$\xi$ is continuous and $\Dc$-parallel along unstable leaves that are
  homeomorphic to Euclidean space, \ie nonperiodic leaves,
\item$\xi\in E^j$ for $j=1$ or $j=2$,
\item if $\xi\in E^j$ then the Lie bracket with the generator
$X$ of the geodesic flow is $[X,\xi]=\Dc_X\xi+j\xi=j\xi$ (the last equality
uses that $\xi$ is $\Dc$-parallel) and hence $\gamma^t(\xi)=e^{jt}\xi$.
\end{itemize}
Here $\Dc$ is a Kanai connection constructed for this purpose (as in
\cite{BFL1,BFL}), and its properties produce such a frame field. We use
that the geodesic flow of a locally symmetric space admits an invariant
splitting of the unstable subbundle into fast- and slow-unstable bundles
corresponding to the exponents $1$ and $2$; we write $E^u=E^1\oplus E^2$.
In the constant-curvature case we have $E^2=0$.

We next show that the Birkhoff average of $F$ has a continuous extension
$\tilde F$ to the entire manifold that is parallel along stable and
unstable manifolds. We then follow arguments in \cite{BFL1,BFL} to show
that this implies $\tilde F=0$ and hence $N=0$.
\section{Proof of\/ \Ref{THMSuspensionmain}}
Consider a hyperbolic automorphism $\psi$ of either a torus or a
nilmanifold $\Gamma\backslash N$. Suppose $\omega$ is an essentially
bounded 2-form such that $\psi_*\omega=\omega$.

One can write a 2-form \emph{locally} as $\sum_{1\le i<j\le
  n}a_{ij}dx^i\wedge dx^j$, and we will look for a way of doing so
\emph{globally} and with constant coefficients. To that end we pass to the
complexification  of the tangent bundle and work with a basis of
$N$-invariant sections $X_i$.

At one point we choose a basis in such a way that $\psi$ is in Jordan
canonical form with respect to the dual basis consisting of the forms
$X_i^*$, \ie $\psi=\sum_i a_{ij}X_i^*$ with $A=(a_{ij})$ in Jordan form
(strictly triangular since we passed to the complexification). Now
translate the basis and the dual basis by $N$ to get invariant sections.
With these choices, a Jordan block for eigenvalue $\lambda_l$ for the $n$th
iterate is of the form
$
A_l^n=\lambda_l^nM_{\ell,\lambda_l}(n)
$
with $
M_{\ell,\lambda_l}(n)$ a fixed polynomial in $n$.

Now denote by $\Omega(x)$ the matrix that represents $\omega_x$ with
respect to the invariant frame field. Then the matrix of $\psi_*\omega$ is
given by ${}^t\!\!A\,\Omega(x)A$ and hence the iterated relation
$\omega=\psi^n_*\omega$ becomes $\Omega(\psi^n(x))={}^t\!\!A^n\Omega(x)A^n$,
which is bounded (in $n$) for almost every $x$ (since $\omega$ is
essentially bounded). Fix such an $x$ and decompose $\Omega$ into
(not necessarily square or diagonal) blocks $\Omega_{ij}$ according to
the Jordan form of $A$, \ie in such a way that 
\[
\Omega_{ij}(\psi^n(x))={}^t\!\!A_i^n\Omega_{ij}(x)A_j^n=
(\lambda_i\lambda_j)^n\underbrace{{}^t\!M_{\ell_i,\lambda_i}(n)\;\Omega_{ij}(x)\,M_{\ell_j,\lambda_j}(n)}_{\textstyle\nfd P_{ijx}(n)},
\]
where $\ell_i$ and $\ell_j$ are the sizes of the blocks $A_i$ and $A_j$,
respectively. For any $i$, $j$ and $x$, $P_{ijx}(n)$ is a matrix-valued
polynomial in $n$, and indeed it is constant:
\begin{itemize}
\item If $|\lambda_i\lambda_j|\neq1$ then  $P_{ijx}(n)=0$---otherwise $|(\lambda_i\lambda_j)^nP_{ijx}(n)|$ grows exponentially and is, in
  particular, unbounded.
\item If $|\lambda_i\lambda_j|=1$ then $P_{ijx}(n)$ is constant (in
  $n$)---otherwise $|(\lambda_i\lambda_j)^nP_{ijx}(n)|=|P_{ijx}(n)|$ is
  unbounded.
\end{itemize}
We therefore have
\[
{}^t\!M_{\ell_i,\lambda_i}(n)\Omega_{ij}(x)M_{\ell_j,\lambda_j}(n)=P_{ijx}(n)=P_{ijx}(0)=\Omega_{ij}(x),
\]
so $\Omega_{ij}(\psi^n(x))=(\lambda_i\lambda_j)^n\Omega_{ij}(x)$ for
all $i$, $j$. 

This shows that every entry of the matrix $\Omega$ is almost everywhere
equal to an eigenfunction of $\psi$. Since $\psi$ is mixing, all
eigenfunctions are constant, and hence $\Omega$ is almost everywhere equal
to an $M$-invariant (hence smooth) 2-form.

To prove the last assertion of \Ref{THMRigiditySusp}, \ie that $\Omega$
vanishes if it is exact, we introduce a notion of averaging. Let
$$\vol_\C\dfn X_1^*\wedge\dots\wedge X_n^*$$ be the $N$-invariant
complex-valued volume form defined by the dual basis we used before. By
compactness, this gives a finite volume. 
%
Then
for any $p$-form
$$
\alpha=\sum_{i_1,\dots,i_p}\alpha_{i_1,\dots,i_p}X_{i_1}^*\wedge\dots\wedge X_{i_p}^*
$$
we define the \emph{average}
$$
\overbar\alpha\dfn\sum_{i_1,\dots,i_p}
(\int_{\Gamma\backslash N}\alpha_{i_1,\dots,i_p}\,d\vol_\C)
X_{i_1}^*\wedge\dots\wedge X_{i_p}^*
$$
and prove that 
 $\overbar{d\alpha}=d\overbar\alpha$ for any 1-form $\alpha$.
We can write
$$
\psi_*(\sum_i\overbar\alpha_iX_i^*)
=
\sum_i\overbar\alpha_i\psi_*(X_i^*)
=
\sum_i(\overbar\alpha_ia_{ij})X_j^*,
$$
and since the coefficients
here are constant, we obtain
$d\psi_*\overbar\alpha=\psi_*d\overbar\alpha$.

If $\omega$ is an exact $\psi$-invariant 2-form with constant coefficients,
then we write $\omega=d\alpha$ and note that
$$
\psi_*d\overbar\alpha
=
\psi_*\overbar{d\alpha}
=
\psi_*\overbar\omega
=
\psi_*\omega
=
\omega
=
\overbar\omega
=
\overbar{d\alpha}
=
d\overbar\alpha,
$$
\ie$d(\psi_*\overbar\alpha-\overbar\alpha)=\psi_*d\overbar\alpha-d\overbar\alpha=0$. Thus, there is an $f$ such
that $\psi_*\overbar\alpha-\overbar\alpha=df$ and, in particular,
$$
\sum\overbar\alpha_ia_{ij}-\overbar\alpha_j=df(X_j).
$$
Since, on the other hand, $\int_{\Gamma\backslash N}df(X_j)\,d\vol_\C=0$
and the integrand is constant, we have
$\sum\overbar\alpha_ia_{ij}=\overbar\alpha_j$, \ie $\overbar\alpha$ is a
$\psi$-invariant 1-form. But then, hyperbolicity of
$\psi$ implies that $\overbar\alpha=0$ (see, \eg \cite[Lemma 1]{FK}) and hence $\omega=\overbar\omega=\overbar{d\alpha}=d\overbar\alpha=0$.

\end{document}